\numberwithin{equation}{section}
\newtheorem{thm}{Theorem}[section]
\newtheorem{lem}[thm]{Lemma}
\newtheorem{cor}[thm]{Corollary}
\newtheorem{prop}[thm]{Proposition}
\theoremstyle{definition}
\newtheorem{rem}[thm]{Remark}
\title{Reduction of bielliptic hyperelliptic functions of genus 3}
\author{Takanori Ayano\footnote{Osaka Central Advanced Mathematical Institute, Osaka Metropolitan University, \newline \hspace{3ex} 3-3-138, Sugimoto, Sumiyoshi-ku, Osaka, 558-8585, Japan. \newline \hspace{3ex} Email: ayano@omu.ac.jp
\newline \hspace{3ex} 2020 Mathematics Subject Classification: Primary 14H42, Secondary 14K25, 14H40 
\newline \hspace{3ex} Key words and phrases: reduction of hyperelliptic functions, elliptic function, hyperelliptic sigma function, Jacobian variety of an algebraic curve
}}
\date{}
\begin{document}
\maketitle

\begin{abstract}
The present paper is devoted to the problem about the reduction of hyperelliptic functions of genus 3. 
Our research was motivated by applications to the theory of equations and dynamical systems integrable in hyperelliptic functions. 
In this paper, we consider a hyperelliptic curve of genus 3 which admits a morphism of degree 2 to an elliptic curve. 
We express the hyperelliptic functions associated with the curve of genus 3 in terms of the Weierstrass elliptic functions and hyperelliptic functions of genus 2. 
\end{abstract}

\section{Introduction}

The elliptic sigma function, which is defined and studied by Weierstrass, is generalized to the sigma functions associated with hyperelliptic curves and many applications in integrable systems and mathematical physics are derived
(cf. \cite{BEL-97-1, BEL-97-2, BEL-2012}). 
For example, it is well known that the hyperelliptic functions defined by the logarithmic derivatives of the hyperelliptic sigma functions satisfy the KdV hierarchy and the KP equation.  

The problem whether the Jacobian variety of a hyperelliptic curve is isogenous to the direct product of the Jacobian varieties of hyperelliptic curves of lower genera is considered in many papers 
(e.g., \cite{B-W-2003, BCMS2021, B-L-S-2013, FK, Howe-Leprevost-Poonen-Large-torsion-subgroups-2000, KT, Ku, P, J.P.Serre-2020}).
This problem is naturally connected with the following well-known problem: Suppose that solutions of differential equations and dynamical systems are given in terms of hyperelliptic functions. 
Under conditions when a reduction of these functions to hyperelliptic functions of lower genera is possible, find an explicit form of these solutions in terms of the hyperelliptic functions of lower genera. 

The reduction of the Riemann period matrices is studied by Weierstrass and Poincar\'e. 
This is summarized in \cite{BE0, E1}. 
The reduction of the Riemann theta functions is studied in \cite{BE0, E1, AOSmirnov1988}. 
The reduction of hyperelliptic integrals to elliptic integrals is studied in many papers (e.g., \cite{BE0, E1, BE2, O-Bolza-1887, O-Bolza-1935}).
In particular, O. Bolza derived many examples of the reduction of hyperelliptic integrals to elliptic integrals (cf.~\cite{O-Bolza-1887, O-Bolza-1935}). 
This problem is closely related to that of a morphism from a hyperelliptic curve to an elliptic curve. 
In this paper, we consider the reduction of hyperelliptic functions. 
A curve is called \textit{bielliptic} if it admits a morphism of degree 2 to an elliptic curve. 
In \cite{Enolskii-Salerno-1996}, the hyperelliptic functions associated with a bielliptic hyperelliptic curve of genus 2 are expressed in terms of the Jacobi elliptic functions. 
In \cite{AB2022}, the hyperelliptic functions associated with a bielliptic hyperelliptic curve of genus 2 are expressed in terms of the Weierstrass elliptic functions. 
The purpose of this paper is to express the hyperelliptic functions associated with a bielliptic hyperelliptic curve of genus 3 in terms of the Weierstrass elliptic functions and hyperelliptic functions of genus 2.

For a hyperelliptic curve $\mathcal{X}$ of genus $g$, let $\sigma_{\mathcal{X}}(u)$ with $u={}^t(u_1,u_3,\dots,u_{2g-1})\in\mathbb{C}^g$ be the sigma function associated with $\mathcal{X}$, which is a holomorphic function on $\mathbb{C}^g$
 (cf. \cite{BEL-97-1, BEL-97-2, BEL-2012}). 
Let $\wp_{i,j}^{\mathcal{X}}=-\partial_{u_j}\partial_{u_i}\log\sigma_{\mathcal{X}}$ and $\wp_{i,j,k}^{\mathcal{X}}=\partial_{u_k}\wp_{i,j}^{\mathcal{X}}$, where $\partial_{u_l}=\partial/\partial u_l$. 
Let $\operatorname{Jac}(\mathcal{X})$ be the Jacobian variety of $\mathcal{X}$. 
The functions $\wp_{i,j}^{\mathcal{X}}$ and $\wp_{i,j,k}^{\mathcal{X}}$ are meromorphic functions on $\operatorname{Jac}(\mathcal{X})$.  
Any meromorphic function on $\operatorname{Jac}(\mathcal{X})$ has a rational representation in terms of $\wp_{1,i}^{\mathcal{X}}$ and $\wp_{1,1,i}^{\mathcal{X}}$ with $i=1,3,\dots,2g-1$. 
It is well known that the function $\wp_{1,1}^{\mathcal{X}}$ gives a solution of the KdV hierarchy (cf. \cite{BEL-97-1, BEL-97-2, BEL-2012}). 
For $\alpha, \beta, \gamma\in\mathbb{C}$ satisfying $\beta\gamma\neq0$, we consider the polynomial 
\[f(x)=x(x-1)(x-\alpha^2)(x-\beta^2)(x-\gamma^2)(x-\alpha^2/\beta^2)(x-\alpha^2/\gamma^2).\]
We assume that $f(x)$ has no multiple roots. 
We consider the non-singular hyperelliptic curve of genus $3$
\[V=\Bigl\{(x,y)\in\mathbb{C}^2 \Bigm| y^2=f(x)\Bigr\}.\]
A hyperelliptic curve of genus 3 is bielliptic if and only if the curve is isomorphic to a curve in the form of $V$ (see Section \ref{2024.12.24.1}). 
Let 
\[k_1=\frac{\sqrt{-1}\beta}{\sqrt{(1-\beta^2)(\beta^2-\alpha^2)}},\qquad k_2=\frac{4\sqrt{-1}\alpha\beta\gamma}{(1-\alpha)(\beta^2-\alpha)(\gamma^2-\alpha)}.\]
We consider the elliptic curve $E$ and the hyperelliptic curve $C$ of genus 2 defined by 
\begin{align*}
E&=\biggl\{(X,Y)\in\mathbb{C}^2 \biggm| Y^2=X(X-1)\Bigl(X-k_1^2(1-1/\gamma^2)(\gamma^2-\alpha^2)\Bigr)\biggr\},\\
C&=\Bigl\{(X,Y)\in\mathbb{C}^2 \Bigm| Y^2=X(X-1)(X-a^2)(X-b^2)(X-c^2)\Bigr\},
\end{align*}
where 
\[a=\frac{1+\alpha}{1-\alpha},\qquad b=\frac{\beta^2+\alpha}{\beta^2-\alpha},\qquad c=\frac{\gamma^2+\alpha}{\gamma^2-\alpha}.\]
Then we can define the morphisms of degree 2 (see Section \ref{2024.12.24.1})
\begin{align*}
&\varphi_1\colon\quad V\to E,\qquad (x,y)\mapsto(X,Y)=\Biggl(\frac{k_1^2(x-1)(x-\alpha^2)}{x}, \frac{k_1^3y}{x^2}\Biggr),\\
&\varphi_2\colon\quad V\to C,\qquad (x,y)\mapsto(X,Y)=\Biggl(\biggl(\frac{x+\alpha}{x-\alpha}\biggr)^2, \frac{4\alpha k_2(x+\alpha)y}{(x-\alpha)^5}\Biggr).
\end{align*}
The maps $\varphi_1$ and $\varphi_2$ induce the homomorphisms of the Jacobian varieties
\begin{gather*}
\varphi_1^*\colon\ \operatorname{Jac}(E)\to\operatorname{Jac}(V),\qquad
\varphi_2^*\colon\ \operatorname{Jac}(C)\to\operatorname{Jac}(V). 
\end{gather*}
In this paper, we describe the maps $\varphi_1^*$ and $\varphi_2^*$ explicitly in Lemma \ref{2.15.1}.
For simplicity, we denote $\wp_{1,1}^E$ by $\wp$. Let $\wp'=\frac{d}{du}\wp$. 
For $u\in \operatorname{Jac}(E)$, the functions $\wp_{i,j}^V\bigl(\varphi_1^*(u)\bigr)$ and $\wp_{i,j,k}^V\bigl(\varphi_1^*(u)\bigr)$ are meromorphic functions on $\operatorname{Jac}(E)$. 
In this paper, for $i=1,3,5$, we describe the functions $\wp_{1,i}^V\bigl(\varphi_1^*(u)\bigr)$ and $\wp_{1,1,i}^V\bigl(\varphi_1^*(u)\bigr)$ in terms of $\wp(u)$ and $\wp'(u)$ explicitly in Theorem \ref{2023.2.21.2}, i.e.,
we show that the restrictions of the hyperelliptic functions $\wp_{1,i}^V$ and $\wp_{1,1,i}^V$ to the appropriate subspace of dimension $1$ in $\mathbb{C}^3$ are elliptic functions and describe them in terms of the Weierstrass elliptic functions associated with $E$. 
For $u\in \operatorname{Jac}(C)$, the functions $\wp_{i,j}^V\bigl(\varphi_2^*(u)\bigr)$ and $\wp_{i,j,k}^V\bigl(\varphi_2^*(u)\bigr)$ are meromorphic functions on $\operatorname{Jac}(C)$. 
In \cite{B-K-2020}, the values of the hyperelliptic functions of genus $g$ on the image of $g+1$ points with respect to the Abel-Jacobi map are expressed in terms of rational functions of the coordinates of these points. 
By using this formula, for $i=1,3,5$, we describe the functions $\wp_{1,i}^V\bigl(\varphi_2^*(u)\bigr)$ and $\wp_{1,1,i}^V\bigl(\varphi_2^*(u)\bigr)$ in terms of $\wp_{1,j}^C(u)$ and $\wp_{1,1,j}^C(u)$ with $j=1,3$ explicitly in Theorem \ref{2023.2.10.1}, i.e.,
we show that the restrictions of the hyperelliptic functions $\wp_{1,i}^V$ and $\wp_{1,1,i}^V$ to the appropriate subspace of dimension $2$ in $\mathbb{C}^3$ are hyperelliptic functions of genus $2$ and describe them in terms of the hyperelliptic functions associated with $C$.  
In \cite{BL-2005}, the addition formula for the hyperelliptic functions of genus $g$ are given. 
By using this addition formula, for $i=1,3,5$, we show that it is possible to express $\wp_{1,i}^V$ and $\wp_{1,1,i}^V$ on $\mathbb{C}^3$ in terms of the elliptic functions associated with $E$ and the hyperelliptic functions of genus $2$ associated with $C$ explicitly in Corollary \ref{2023.2.21.3}. 

The reductions of the Riemann theta functions and the hyperelliptic functions are important in real physical problems (e.g., \cite{AP-2006, BE0, E1, BE2, FCoppini2020, Enolskii-Salerno-1996, Grinevich2024, AOSmirnov1988, AOSmirnov}). 
In \cite{Matsutani2024}, a graphical representation of the hyperelliptic functions of genus 2 is given. 
In \cite{Bernatska2024}, numerical computations for the hyperelliptic functions are studied.  
By using the reduction formulae for hyperelliptic functions, it is easier to compute the values of the hyperelliptic functions of higher genera. 
I believe that our results of this paper will contribute to numerical computations and graphical representations of the hyperelliptic functions. 

The present paper is organized as follows. 
In Section 2, we review the definition and properties of the hyperelliptic sigma functions. 
In Section 3, we give a normal form of a bielliptic hyperelliptic curve of genus 3 and maps from the bielliptic hyperelliptic curve of genus 3 to an elliptic curve and a hyperelliptic curve of genus 2 explicitly. 
In Section 4, we describe the maps between the Jacobian varieties induced by the maps from the bielliptic hyperelliptic curve of genus 3 to the elliptic curve and the hyperelliptic curve of genus 2 explicitly. 
In Section 5, we express the hyperelliptic functions associated with the bielliptic hyperelliptic curve of genus 3 in terms of the elliptic functions and the hyperelliptic functions of genus 2.

\section{The hyperelliptic sigma functions}\label{2022.8.29.1}

In this section, we review the definition of the hyperelliptic sigma functions and give facts about them which will be used later on. 
For details of the hyperelliptic sigma functions, see \cite{BEL-97-1, BEL-97-2, BEL-2012}.  

We set 
\[N(x)=x^{2g+1}+\lambda_2x^{2g}+\lambda_4x^{2g-1}+\cdots+\lambda_{4g}x+\lambda_{4g+2}, \qquad \lambda_i\in\mathbb{C}.\]
We assume that $N(x)$ has no multiple roots. 
We consider the non-singular hyperelliptic curve of genus $g$
\[\mathcal{X}=\Bigl\{(x,y)\in\mathbb{C}^2 \Bigm| y^2=N(x)\Bigr\}.\]  
A basis of the vector space consisting of holomorphic 1-forms on $\mathcal{X}$ is given by 
\[
\omega_{2i-1}^{\mathcal{X}}=-\frac{x^{g-i}}{2y}dx, \qquad 1\le i\le g. 
\]
We set $\omega_\mathcal{X}={}^t(\omega_1^{\mathcal{X}},\omega_3^{\mathcal{X}},\dots,\omega_{2g-1}^{\mathcal{X}})$. 
We consider the following meromorphic 1-forms on $\mathcal{X}$:  
\begin{equation}
\eta_{2i-1}^{\mathcal{X}}=-\frac{1}{2y}\sum_{k=g-i+1}^{g+i-1}(k+i-g)\lambda_{2g+2i-2k-2}x^kdx, \qquad 1\le i\le g,\label{dr}
\end{equation}
which are holomorphic at any point except $\infty$. 
In (\ref{dr}), we set $\lambda_0=1$. For example, for $g=1$ we have 
\[\eta_1^{\mathcal{X}}=-\frac{x}{2y}dx,\]
for $g=2$ we have 
\[\eta_1^{\mathcal{X}}=-\frac{x^2}{2y}dx, \qquad \eta_3^{\mathcal{X}}=-\frac{\lambda_4x+2\lambda_2x^2+3x^3}{2y}dx,\]
and for $g=3$ we have 
\begin{gather*}
\eta_1^{\mathcal{X}}=-\frac{x^3}{2y}dx,\qquad \eta_3^{\mathcal{X}}=-\frac{\lambda_4x^2+2\lambda_2x^3+3x^4}{2y}dx,\\
\eta_5^{\mathcal{X}}=-\frac{\lambda_8x+2\lambda_6x^2+3\lambda_4x^3+4\lambda_2x^4+5x^5}{2y}dx.
\end{gather*}
Let $\{\mathfrak{a}_i,\mathfrak{b}_i\}_{i=1}^g$ be a canonical basis in the one-dimensional homology group of the curve $\mathcal{X}$. 
We define the period matrices by 
\begin{gather*}
2\omega'=\Biggl(\int_{\mathfrak{a}_j}\omega_{2i-1}^{\mathcal{X}}\Biggr),\qquad 2\omega''=\Biggl(\int_{\mathfrak{b}_j}\omega_{2i-1}^{\mathcal{X}}\Biggr),\\
-2\eta'=\Biggl(\int_{\mathfrak{a}_j}\eta_{2i-1}^{\mathcal{X}}\Biggr), \qquad -2\eta''=\Biggl(\int_{\mathfrak{b}_j}\eta_{2i-1}^{\mathcal{X}}\Biggr).
\end{gather*}
We define the period lattice $\Lambda=\bigl\{2\omega'm_1+2\omega''m_2\bigm | m_1,m_2\in\mathbb{Z}^g\bigr\}$ and consider the Jacobian variety $\operatorname{Jac}(\mathcal{X})=\mathbb{C}^g/\Lambda$. 
Let $\pi_{\mathcal{X}}\colon \mathbb{C}^g\to\mbox{Jac}(\mathcal{X})$ be the natural projection.  
The normalized period matrix is given by $\tau=(\omega')^{-1}\omega''$. 
Let $\tau\delta'+\delta''$ with $\delta',\delta''\in\mathbb{R}^g$ be the Riemann constant with respect to $\bigl(\{\mathfrak{a}_i,\mathfrak{b}_i\}_{i=1}^g,\infty\bigr)$. We denote the imaginary unit by $\textbf{i}$.
The sigma function $\sigma_{\mathcal{X}}(u)$ associated with the curve $\mathcal{X}$, $u={}^t(u_1, u_3,\dots, u_{2g-1})\in\mathbb{C}^g$, is defined by
\[
\sigma_{\mathcal{X}}(u)=\varepsilon\exp\biggl(\frac{1}{2}{}^tu\eta'(\omega')^{-1}u\biggr)\theta\begin{bmatrix}\delta'\\ \delta'' \end{bmatrix}\bigl((2\omega')^{-1}u,\tau\bigr),
\]
where $\theta\begin{bmatrix}\delta'\\ \delta'' \end{bmatrix}(u,\tau)$ is the Riemann theta function with the characteristics $\begin{bmatrix}\delta'\\ \delta'' \end{bmatrix}$ defined by
\[
\theta\begin{bmatrix}\delta'\\ \delta'' \end{bmatrix}(u,\tau)=\sum_{n\in\mathbb{Z}^g}\exp\bigl\{\pi{\bf i}\,{}^t(n+\delta')\tau(n+\delta')+2\pi{\bf i}\,{}^t(n+\delta')(u+\delta'')\bigr\}
\]
and $\varepsilon$ is a non-zero constant. 

\begin{prop}[{\cite[pp.~7--8]{BEL-97-1}}]
For $m_1,m_2\in\mathbb{Z}^g$, let $\Omega=2\omega'm_1+2\omega''m_2$.
Then, for $u\in\mathbb{C}^g$, we have
\begin{align*}
&\sigma_{\mathcal{X}}(u+\Omega)/\sigma_{\mathcal{X}}(u) \\ 
&=(-1)^{2({}^t\delta'm_1-{}^t\delta''m_2)+{}^tm_1m_2}\exp\bigl\{{}^t(2\eta'm_1+2\eta''m_2)(u+\omega'm_1+\omega''m_2)\bigr\}.
\end{align*}
\end{prop}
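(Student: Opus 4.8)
The plan is to reduce the quasi-periodicity of $\sigma_{\mathcal{X}}$ to the well-known transformation law of the Riemann theta function with characteristics, and then to absorb the resulting factors into the claimed closed form by means of the bilinear (Legendre) relations among the period matrices. From the definition $\sigma_{\mathcal{X}}(u)=\varepsilon\exp\bigl(\frac{1}{2}{}^tu\,\eta'(\omega')^{-1}u\bigr)\,\theta\bigl((2\omega')^{-1}u,\tau\bigr)$ (writing $\theta$ for the theta function with characteristics $\delta',\delta''$), the ratio $\sigma_{\mathcal{X}}(u+\Omega)/\sigma_{\mathcal{X}}(u)$ factors as the product of the ratio of the Gaussian prefactors and the ratio of the theta values. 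First I would record the three period relations I intend to use: the symmetry ${}^t\omega'\eta'={}^t\eta'\omega'$ (equivalently, $M:=\eta'(\omega')^{-1}$ is symmetric), the symmetry of $\tau=(\omega')^{-1}\omega''$, and the Legendre relation ${}^t\omega''\eta'-{}^t\eta''\omega'=\frac{\pi{\bf i}}{2}I_g$; all three follow from the Riemann bilinear relations for the holomorphic differentials and the second-kind differentials $\eta^{\mathcal{X}}_{2i-1}$.

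Second, I would compute the theta ratio. With $v=(2\omega')^{-1}u$, the shift $u\mapsto u+\Omega$, where $\Omega=2\omega'm_1+2\omega''m_2$, sends $v\mapsto v+m_1+\tau m_2$, a translation by a point of the theta lattice. A direct manipulation of the defining series — shifting the summation index by $m_2$ to complete the square in the quadratic exponent, and using $\exp(2\pi{\bf i}\,{}^tn\,m_1)=1$ for the integral part — yields the standard law $\theta(v+m_1+\tau m_2,\tau)=\exp\bigl\{2\pi{\bf i}\,{}^t\delta'm_1-\pi{\bf i}\,{}^tm_2\tau m_2-2\pi{\bf i}\,{}^tm_2(v+\delta'')\bigr\}\,\theta(v,\tau)$. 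For the prefactor, symmetry of $M$ makes the difference of quadratic forms telescope to ${}^t\Omega M u+\frac12{}^t\Omega M\Omega$, contributing the factor $\exp\bigl({}^t\Omega M u+\frac12{}^t\Omega M\Omega\bigr)$.

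Third, I would combine the two contributions and separate the exponent into its $u$-linear part and its $u$-independent part. For the linear part, the coefficient of $u$ is ${}^t\Omega M-\pi{\bf i}\,{}^tm_2(\omega')^{-1}$; substituting $\Omega=2\omega'm_1+2\omega''m_2$ and applying ${}^t\omega'\eta'(\omega')^{-1}={}^t\eta'$ to the $m_1$-block and the Legendre relation to the $m_2$-block turns this into ${}^t(2\eta'm_1+2\eta''m_2)={}^tH$, matching the linear term ${}^tHu$ of the target with $H=2\eta'm_1+2\eta''m_2$. The factor $\exp\bigl\{2\pi{\bf i}({}^t\delta'm_1-{}^t\delta''m_2)\bigr\}$ from the theta law reproduces $(-1)^{2({}^t\delta'm_1-{}^t\delta''m_2)}$ verbatim.

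Finally, the crux is the purely quadratic remainder $\frac12{}^t\Omega M\Omega-\pi{\bf i}\,{}^tm_2\tau m_2-\frac12{}^tH\Omega$, which I claim equals $\pi{\bf i}\,{}^tm_1m_2$. Expanding all four blocks and reducing with the same relations, the $m_1m_1$-terms cancel by symmetry, the $m_2m_2$-terms produce $+\pi{\bf i}\,{}^tm_2\tau m_2$ which annihilates the theta contribution, and the two cross-terms collapse via the Legendre relation to $\pi{\bf i}\,{}^tm_1m_2$, giving the integer factor $(-1)^{{}^tm_1m_2}$; rewriting ${}^tHu+\frac12{}^tH\Omega={}^tH(u+\omega'm_1+\omega''m_2)$ then assembles the stated formula. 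I expect the main obstacle to be exactly this last bookkeeping step: it is sensitive to the sign and transpose conventions in the Legendre relation, and the half-integer weights must be tracked carefully so that the quadratic remainder lands on the integer exponent ${}^tm_1m_2$ rather than on an off-by-a-factor-of-two discrepancy.
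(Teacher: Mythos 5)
Your derivation is correct: the paper itself gives no proof of this proposition (it is quoted from \cite[pp.~7--8]{BEL-97-1}), and your argument --- splitting the ratio into the Gaussian prefactor and the theta factor, applying the standard translation law for $\theta\begin{bmatrix}\delta'\\ \delta''\end{bmatrix}$, and cleaning up the linear and quadratic exponents with the symmetry of $\eta'(\omega')^{-1}$ and the generalized Legendre relation --- is exactly the standard route taken in that reference. The one point to keep straight, as you note, is that the sign in ${}^t\omega''\eta'-{}^t\eta''\omega'=\tfrac{\pi\mathbf{i}}{2}I_g$ must be checked against this paper's convention $-2\eta'=\bigl(\int_{\mathfrak{a}_j}\eta_{2i-1}^{\mathcal{X}}\bigr)$ and the orientation of the canonical homology basis; with that convention your bookkeeping lands precisely on ${}^t\Omega M\Omega/2-\pi\mathbf{i}\,{}^tm_2\tau m_2-{}^tH\Omega/2=\pi\mathbf{i}\,{}^tm_1m_2$ as claimed.
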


Let $\wp_{i,j}^{\mathcal{X}}=-\partial_{u_j}\partial_{u_i}\log\sigma_{\mathcal{X}}$ and $\wp_{i,j,k}^{\mathcal{X}}=\partial_{u_k}\wp_{i,j}^{\mathcal{X}}$, where $\partial_{u_l}=\partial/\partial u_l$.

\begin{rem}
The functions $\wp_{i,j}^{\mathcal{X}}$ and $\wp_{i,j,k}^{\mathcal{X}}$ are meromorphic functions on $\operatorname{Jac}(\mathcal{X})$. 
Any meromorphic function on $\operatorname{Jac}(\mathcal{X})$ has a rational representation in terms of $\wp_{1,i}^{\mathcal{X}}$ and $\wp_{1,1,i}^{\mathcal{X}}$ with $i=1,3,\dots,2g-1$. 
\end{rem}

\begin{rem}
In \cite[Theorem 4.12]{BEL-97-1}, it is proved that the function $2\wp_{1,1}^{\mathcal{X}}+2\lambda_2/3$ satisfies the KdV hierarchy. 
\end{rem}

\section{Bielliptic hyperelliptic curves of genus 3}\label{2024.12.24.1}

A curve is called \textit{bielliptic} if it admits a morphism of degree 2 to an elliptic curve. 
For $a,b,c\in\mathbb{C}$, we consider the polynomial 
\[F(s)=(s^2-1)(s^2-a^2)(s^2-b^2)(s^2-c^2).\] 
We assume that $F(s)$ has no multiple roots. 
We consider the non-singular hyperelliptic curve of genus $3$
\[H=\Bigl\{(s,t)\in\mathbb{C}^2 \Bigm| t^2=F(s)\Bigr\}.\]
It is known that a hyperelliptic curve of genus 3 is bielliptic if and only if the curve is isomorphic to a curve in the form of $H$ (cf. \cite[Lemma 2.2]{S1} and \cite[Section 3]{KT}). 
We consider the elliptic curve $W$ and the hyperelliptic curve $C$ of genus 2 defined by 
\begin{align*}
W&=\Bigl\{(S,T)\in\mathbb{C}^2 \Bigm | T^2=(S-1)(S-a^2)(S-b^2)(S-c^2)\Bigr\},\\
C&=\Bigl\{(X,Y)\in\mathbb{C}^2 \Bigm | Y^2=X(X-1)(X-a^2)(X-b^2)(X-c^2)\Bigr\}.
\end{align*}
Then we have the morphisms of degree 2
\begin{align*}
&\phi_1\colon\quad H\to W,\qquad (s,t)\mapsto(S,T)=(s^2,t),\\
&\phi_2\colon\quad H\to C,\qquad (s,t)\mapsto(X,Y)=(s^2, st)
\end{align*}
(cf. \cite[Lemma 2.2]{S1} and \cite[Section 3]{KT}). 
It is known that the Jacobian variety of $H$ is isogenous to the direct product of the Jacobian varieties of $W$ and $C$ (cf. \cite[Theorem 5]{P} and \cite[Section 3]{KT}).    
For $\alpha, \beta, \gamma\in\mathbb{C}$ satisfying $\beta\gamma\neq0$, we consider the polynomial 
\begin{equation}
f(x)=x(x-1)(x-\alpha^2)(x-\beta^2)(x-\gamma^2)(x-\alpha^2/\beta^2)(x-\alpha^2/\gamma^2).\label{2023.2.21.1}
\end{equation}
We assume that $f(x)$ has no multiple roots. 
We consider the non-singular hyperelliptic curve of genus $3$
\[V=\Bigl\{(x,y)\in\mathbb{C}^2 \Bigm | y^2=f(x)\Bigr\}.\]
Let 
\[k_1=\frac{\sqrt{-1}\beta}{\sqrt{(1-\beta^2)(\beta^2-\alpha^2)}},\qquad k_2=\frac{4\sqrt{-1}\alpha\beta\gamma}{(1-\alpha)(\beta^2-\alpha)(\gamma^2-\alpha)}.\]

\begin{prop}
Given $a, b, c\in\mathbb{C}$ such that $F(s)$ has no multiple roots, the curve $H$ is isomorphic to the curve $V$ with 
\[\alpha=\frac{a-1}{a+1},\qquad \beta=\sqrt{\frac{(a-1)(b+1)}{(a+1)(b-1)}},\qquad \gamma=\sqrt{\frac{(a-1)(c+1)}{(a+1)(c-1)}}\]
by the morphism 
\[\zeta\colon\quad H\to V,\qquad (s,t)\mapsto (x,y)\]
with 
\[
(x,y)=\Biggl(\frac{(a-1)(s+1)}{(a+1)(s-1)},\;\frac{8(a-1)^3t}{(a+1)^4\sqrt{(1-b^2)(c^2-1)}(s-1)^4}\Biggr).
\]
Conversely, given $\alpha, \beta, \gamma\in\mathbb{C}$ such that $\beta\gamma\neq0$ and $f(x)$ has no multiple roots, the curve $V$ is isomorphic to the curve $H$ with 
\begin{equation}
a=\frac{1+\alpha}{1-\alpha},\qquad b=\frac{\beta^2+\alpha}{\beta^2-\alpha},\qquad c=\frac{\gamma^2+\alpha}{\gamma^2-\alpha}\label{4.26}
\end{equation}
by the morphism 
\[\widetilde{\zeta}\colon\quad V\to H,\qquad (x,y)\mapsto(s,t)=\Biggl(\frac{x+\alpha}{x-\alpha},\;\frac{4\alpha k_2y}{(x-\alpha)^4}\Biggr).\]
\end{prop}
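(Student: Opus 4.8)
The plan is to recognize the two substitutions as a single Möbius transformation of the projective line together with its inverse, and then to reduce the whole statement to one rational-function identity relating $f$ and $F$. First I would set $\alpha=(a-1)/(a+1)$ and observe that the $x$-component of $\zeta$ is $x=\alpha(s+1)/(s-1)$, a fractional linear map whose inverse is exactly $s=(x+\alpha)/(x-\alpha)$, the $s$-component of $\widetilde\zeta$; thus on the base $\mathbb{P}^1$ the two maps are already mutually inverse automorphisms. Next I would track the eight branch points. Since $F$ has degree $8$, the curve $H$ is branched over $\{\pm1,\pm a,\pm b,\pm c\}$ and unramified at $\infty$, while $V$ is branched over $\{0,1,\alpha^2,\beta^2,\gamma^2,\alpha^2/\beta^2,\alpha^2/\gamma^2,\infty\}$. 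A direct evaluation gives $s=1\mapsto\infty$, $s=-1\mapsto0$, $s=a\mapsto1$, $s=-a\mapsto\alpha^2$, $s=b\mapsto\beta^2$, $s=-b\mapsto\alpha^2/\beta^2$, $s=c\mapsto\gamma^2$, $s=-c\mapsto\alpha^2/\gamma^2$, and matching these branch loci is precisely what forces the stated relations $\beta^2=\frac{(a-1)(b+1)}{(a+1)(b-1)}$ and $\gamma^2=\frac{(a-1)(c+1)}{(a+1)(c-1)}$, equivalently $b=\frac{\beta^2+\alpha}{\beta^2-\alpha}$ and $c=\frac{\gamma^2+\alpha}{\gamma^2-\alpha}$.

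The heart of the proof is the identity obtained by substituting $x=\alpha(s+1)/(s-1)$ into $f$. Each of the seven linear factors of $f(x)$ becomes, after clearing the common denominator $s-1$, a constant times one of the linear forms $s+1,\ s\mp a,\ s\mp b,\ s\mp c$; collecting these I expect $f\bigl(\alpha(s+1)/(s-1)\bigr)=\kappa\,(s-1)^{-8}F(s)$ for an explicit constant $\kappa$ depending only on $a,b,c$ (equivalently on $\alpha,\beta,\gamma$), since the seven numerator factors reassemble into $(s+1)(s^2-a^2)(s^2-b^2)(s^2-c^2)=F(s)/(s-1)$. Writing $y=\sqrt{\kappa}\,(s-1)^{-4}t$ then turns $t^2=F(s)$ into $y^2=f(x)$, so it remains to check that $\sqrt{\kappa}$ equals the scalar $\frac{8(a-1)^3}{(a+1)^4\sqrt{(1-b^2)(c^2-1)}}$ appearing in $\zeta$. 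For the converse direction I would use $x-\alpha=2\alpha/(s-1)$, so that $(s-1)^4(x-\alpha)^4=16\alpha^4$; substituting this shows that $\widetilde\zeta\circ\zeta$ and $\zeta\circ\widetilde\zeta$ act as the identity on the $t$- and $y$-coordinates provided the two scalars multiply to $16\alpha^4$, i.e. $\sqrt{\kappa}\cdot 4\alpha k_2=16\alpha^4$, and one then verifies that the displayed $k_2=\frac{4\sqrt{-1}\,\alpha\beta\gamma}{(1-\alpha)(\beta^2-\alpha)(\gamma^2-\alpha)}$ indeed satisfies this.

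Finally I would assemble these facts: $\zeta$ and $\widetilde\zeta$ are morphisms between $H$ and $V$ because they are regular away from the $(s-1)$- and $(x-\alpha)$-poles and extend over those points and over $\infty$ by the branch-locus matching, and they are mutually inverse by the $\mathbb{P}^1$-inversion of the base coordinate combined with the scalar relation above; hence both are isomorphisms carrying the curve equations into one another. I expect the main obstacle to be purely computational: carrying the multiplicative constant $\kappa$ through the seven factors and reconciling the square-root branches, namely checking that the $\sqrt{-1}$ and the radicals $\sqrt{(1-\beta^2)(\beta^2-\alpha^2)}$ and $\sqrt{(1-b^2)(c^2-1)}$ entering $k_2$ and $\zeta$ combine consistently. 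Since $\beta,\gamma$ enter $f$ only through $\beta^2,\gamma^2$, the ambiguity in their square roots is absorbed by the hyperelliptic involution $t\mapsto-t$, so a single consistent choice of branches makes all the constants match and completes the verification.
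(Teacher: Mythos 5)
Your proposal is correct and follows the same route the paper intends: the paper's proof is simply ``by the direct calculations,'' and your plan spells out exactly those calculations (the M\"obius map $x=\alpha(s+1)/(s-1)$ with inverse $s=(x+\alpha)/(x-\alpha)$, the matching of the eight branch points, the factor-by-factor identity $f\bigl(\alpha(s+1)/(s-1)\bigr)=\kappa(s-1)^{-8}F(s)$, and the verification that $\kappa=16\alpha^6/k_2^2$ so the $y$- and $t$-scalars are consistent). I checked the constant: $\kappa=\dfrac{64(a-1)^6}{(a+1)^8(1-b^2)(c^2-1)}$, whose square root is the coefficient appearing in $\zeta$, so your outline closes correctly.
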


\begin{proof}
By the direct calculations, we obtain the statement of the proposition. 
\end{proof}

The elliptic curve $W$ with (\ref{4.26}) is isomorphic to the elliptic curve in Legendre form
\[E=\biggl\{(X,Y)\in\mathbb{C}^2 \biggm | Y^2=X(X-1)\Bigl(X-k_1^2(1-1/\gamma^2)(\gamma^2-\alpha^2)\Bigr)\biggr\}\]
by the morphism 
\[\xi\colon\quad W\to E,\;\;\;\;(S,T)\mapsto(X,Y)\]
with 
\[(X,Y)=\Biggl(k_1^2\frac{(\alpha+1)^2-(\alpha-1)^2S}{S-1},\;\frac{4\alpha k_1^3T}{k_2(S-1)^2}\Biggr).\]
We consider the hyperelliptic curve $C$ of genus 2 with (\ref{4.26}) and the maps
\[\varphi_1=\xi\circ\phi_1\circ\widetilde{\zeta}\colon\quad V\to E,\qquad \varphi_2=\phi_2\circ\widetilde{\zeta}\colon\quad V\to C.\]
Then the maps $\varphi_i$ with $i=1,2$ are described by 
\begin{align}
&\varphi_1\colon\quad V\to E,\qquad (x,y)\mapsto(X,Y)=\Biggl(\frac{k_1^2(x-1)(x-\alpha^2)}{x}, \frac{k_1^3y}{x^2}\Biggr),\label{2023.1.9.1}\\
&\varphi_2\colon\quad V\to C,\qquad (x,y)\mapsto(X,Y)=\Biggl(\biggl(\frac{x+\alpha}{x-\alpha}\biggr)^2, \frac{4\alpha k_2(x+\alpha)y}{(x-\alpha)^5}\Biggr).\label{2023.2.8.1}
\end{align}

\section{The maps between the Jacobian varieties}

Let $M_{m,n}(\mathbb{C})$ be the set of the $m\times n$ matrices such that all the components are complex numbers. 
We have $\varphi_1(\infty)=\infty$. 
We consider the maps 
\begin{align*}
&A\colon\quad V\to\operatorname{Jac}(V),\qquad P\mapsto \int_{\infty}^P\omega_V,\\
&A_1\colon\quad E\to\operatorname{Jac}(E),\qquad P\mapsto\int_{\infty}^P\omega_E.
\end{align*}
It is known that we have the unique homomorphism
\[\varphi_{1,*}\colon\quad \operatorname{Jac}(V)\to \operatorname{Jac}(E)\]
such that $A_1\circ\varphi_1=\varphi_{1,*}\circ A$ (cf. \cite[p. 104, Proposition 6.1]{Milne} ). 
\begin{equation}
\begin{split}
\xymatrix{
\operatorname{Jac} (V)  \ar[r]^{\varphi_{1,*}} &\operatorname{Jac}(E) \\
V \ar[u]^{A} \ar[r]^{\varphi_1} &E \ar[u]^{A_1}. 
}\label{2022.9.2.119}
\end{split}
\end{equation}
It is known that we have the unique linear map 
\[\widetilde{\varphi}_{1,*}\colon\quad\mathbb{C}^3\to\mathbb{C},\qquad u\mapsto K_1u,\qquad K_1\in M_{1,3}(\mathbb{C})\]
such that $\varphi_{1,*}\circ\pi_V=\pi_E\circ\widetilde{\varphi}_{1,*}$ (e.g., \cite[Proposition~1.2.1]{B-H-2004}). 
\begin{gather*}
 \begin{CD}
 \mathbb{C}^3 @>{\widetilde{\varphi}_{1,*}}>> \mathbb{C} \\
 @V{\pi_V}VV @V{\pi_E}VV \\
 \operatorname{Jac}(V) @>{\varphi_{1,*}} >> \operatorname{Jac}(E). 
 \end{CD}
\end{gather*}

\begin{lem}\label{3.6.1}
We have $K_1=\displaystyle{\frac{1}{k_1}(1, 0, -\alpha^2)}$. 
\end{lem}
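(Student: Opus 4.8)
The plan is to reduce the computation of the matrix $K_1$ to a pullback computation on holomorphic $1$-forms. By construction, $\widetilde{\varphi}_{1,*}$ is the linear map induced on universal covers by $\varphi_{1,*}$, and the two commutative diagrams tie it to the Abel--Jacobi maps. Since $\varphi_1(\infty)=\infty$, I would lift $A$ and $A_1$ near $\infty$ to holomorphic maps $\widetilde{A}(P)=\int_\infty^P\omega_V\in\mathbb{C}^3$ and $\widetilde{A}_1(P)=\int_\infty^P\omega_E\in\mathbb{C}$. Combining $A_1\circ\varphi_1=\varphi_{1,*}\circ A$ with $\varphi_{1,*}\circ\pi_V=\pi_E\circ\widetilde{\varphi}_{1,*}$ shows that $\widetilde{\varphi}_{1,*}\circ\widetilde{A}$ and $\widetilde{A}_1\circ\varphi_1$ agree modulo the period lattice of $E$; as both vanish at $\infty$ and are continuous, they coincide near $\infty$. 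Taking differentials and using $d\widetilde{A}=\omega_V$, $d\widetilde{A}_1=\omega_E$ then yields the key identity $\varphi_1^*\omega_E=K_1\,\omega_V$, where the right-hand side is the matrix product of the row vector $K_1=(k_{1,1},k_{1,2},k_{1,3})$ with the column $\omega_V={}^t(\omega_1^V,\omega_3^V,\omega_5^V)$.

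Next I would compute the pullback explicitly. For $g=3$ the basis reads $\omega_1^V=-\frac{x^2}{2y}dx$, $\omega_3^V=-\frac{x}{2y}dx$, $\omega_5^V=-\frac{1}{2y}dx$, while for the genus-$1$ curve $E$ we have $\omega_E=-\frac{1}{2Y}dX$. Writing $X=k_1^2\bigl(x-(1+\alpha^2)+\alpha^2/x\bigr)$ from (\ref{2023.1.9.1}), differentiation gives $dX=k_1^2\frac{x^2-\alpha^2}{x^2}dx$; substituting $Y=k_1^3y/x^2$ and cancelling the factors of $x^2$ and one power of $k_1$ produces $\varphi_1^*\omega_E=-\frac{x^2-\alpha^2}{2k_1 y}dx$.

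Finally I would decompose this $1$-form in the basis above: $-\frac{x^2-\alpha^2}{2k_1y}dx=\frac{1}{k_1}\bigl(-\frac{x^2}{2y}dx\bigr)-\frac{\alpha^2}{k_1}\bigl(-\frac{1}{2y}dx\bigr)=\frac{1}{k_1}\omega_1^V-\frac{\alpha^2}{k_1}\omega_5^V$. Comparing with $\varphi_1^*\omega_E=k_{1,1}\omega_1^V+k_{1,2}\omega_3^V+k_{1,3}\omega_5^V$ and invoking the linear independence of the holomorphic $1$-forms gives $k_{1,1}=1/k_1$, $k_{1,2}=0$, $k_{1,3}=-\alpha^2/k_1$, which is exactly $K_1=\frac{1}{k_1}(1,0,-\alpha^2)$. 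I expect the only genuinely delicate point to be the justification of the identity $\varphi_1^*\omega_E=K_1\omega_V$, namely verifying that the lifts of the Abel maps can be chosen compatibly so that no spurious lattice constant appears; once this functoriality is secured, the remaining work is the elementary differentiation and pattern-matching carried out above.
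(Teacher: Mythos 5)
Your argument is correct and is essentially the paper's own proof: both reduce the determination of $K_1$ to the identity $\varphi_1^*\omega_E=a_1\omega_1^V+a_3\omega_3^V+a_5\omega_5^V$ via the two commutative diagrams, compute $\varphi_1^*\omega_E=\frac{1}{k_1}\omega_1^V-\frac{\alpha^2}{k_1}\omega_5^V$ from (\ref{2023.1.9.1}), and conclude by linear independence of $\omega_1^V,\omega_3^V,\omega_5^V$. Your explicit differentiation of $X=k_1^2\bigl(x-(1+\alpha^2)+\alpha^2/x\bigr)$ and the remark on choosing compatible lifts (both vanishing at $\infty$, so no lattice constant appears) merely spell out details the paper leaves implicit.
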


\begin{proof}
Let $K_1=(a_1, a_3, a_5)$. 
We consider the commutative diagram (\ref{2022.9.2.119}). 
The pullback of the holomorphic 1-form $du_1$ on $\operatorname{Jac}(E)$ with respect to $\varphi_{1,*}$ is $a_1du_1+a_3du_3+a_5du_5$.
The pullback of $a_1du_1+a_3du_3+a_5du_5$ with respect to $A$ is $a_1\omega_1^V+a_3\omega_3^V+a_5\omega_5^V$.
On the other hand, the pullback of $du_1$ with respect to $A_1$ is $\omega_E$. 
The pullback of $\omega_E$ with respect to $\varphi_1$ is $\frac{1}{k_1}\omega_1^V-\frac{\alpha^2}{k_1}\omega_5^V$. 
Since $\omega_1^V, \omega_3^V, \omega_5^V$ are linearly independent, we obtain the statement of the lemma. 
\end{proof}

We consider the map
\[A_2\colon\quad C\to\operatorname{Jac}(C),\qquad P\mapsto\int_{(1,0)}^P\omega_C.\]
It is known that we have the unique homomorphism 
\[\varphi_{2,*}\colon\quad \operatorname{Jac}(V)\to \operatorname{Jac}(C)\]
such that $A_2\circ\varphi_2=\varphi_{2,*}\circ A$ (cf. \cite[p. 104, Proposition 6.1]{Milne}). 
\begin{equation}
\begin{split}
\xymatrix{
\operatorname{Jac} (V)  \ar[r]^{\varphi_{2,*}} &\operatorname{Jac}(C) \\
V \ar[u]^{A} \ar[r]^{\varphi_2} &C \ar[u]^{A_2}. 
}\label{2022.9.2.11}
\end{split}
\end{equation}
It is known that we have the unique linear map 
\[\widetilde{\varphi}_{2,*}\colon\quad\mathbb{C}^3\to\mathbb{C}^2,\qquad u\mapsto K_2u,\qquad K_2\in M_{2,3}(\mathbb{C})\]
such that $\varphi_{2,*}\circ\pi_V=\pi_C\circ\widetilde{\varphi}_{2,*}$ (e.g., \cite[Proposition~1.2.1]{B-H-2004}). 
\begin{gather*}
 \begin{CD}
 \mathbb{C}^3 @>{\widetilde{\varphi}_{2,*}}>> \mathbb{C}^2 \\
 @V{\pi_V}VV @V{\pi_C}VV \\
 \operatorname{Jac}(V) @>{\varphi_{2,*}} >> \operatorname{Jac}(C).
 \end{CD}
\end{gather*}

\begin{lem}\label{3.6.11999}
We have $\displaystyle{K_2=-\frac{1}{k_2}\begin{pmatrix}1&2\alpha&\alpha^2\\1&-2\alpha&\alpha^2\end{pmatrix}}$. 
\end{lem}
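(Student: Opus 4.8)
The plan is to mirror the proof of Lemma \ref{3.6.1} exactly, since the structure of the two lemmas is identical: in both cases we pull back the normalized holomorphic $1$-forms via the right-hand-then-up composition of the relevant commutative diagram, pull them back again via the left-hand-then-across composition, and read off the matrix by comparing coefficients against the linearly independent basis $\omega_1^V,\omega_3^V,\omega_5^V$. First I would set $K_2=\begin{pmatrix}a_1&a_3&a_5\\b_1&b_3&b_5\end{pmatrix}$ and work with the commutative diagram (\ref{2022.9.2.11}). The curve $C$ has genus $2$, with coordinates $(X,Y)$ and defining polynomial of degree $5$; its normalized holomorphic $1$-forms are $\omega_1^C=-\frac{X}{2Y}dX$ and $\omega_3^C=-\frac{1}{2Y}dX$, matching the general formula $\omega_{2i-1}^{\mathcal{X}}=-\frac{x^{g-i}}{2y}dx$ for $g=2$.

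Going \emph{up-then-across}, the pullback of the holomorphic $1$-form $du_1$ on $\operatorname{Jac}(C)$ under $\varphi_{2,*}$ is $a_1\,du_1+a_3\,du_3+a_5\,du_5$, and the pullback of $du_3$ is $b_1\,du_1+b_3\,du_3+b_5\,du_5$; pulling these back under $A$ gives $a_1\omega_1^V+a_3\omega_3^V+a_5\omega_5^V$ and $b_1\omega_1^V+b_3\omega_3^V+b_5\omega_5^V$ respectively. Going \emph{across-then-up}, I pull back the basis $\omega_1^C,\omega_3^C$ of $\operatorname{Jac}(C)$ first under $A_2$ (which gives $\omega_C$ itself) and then under $\varphi_2$. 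Using the explicit formula (\ref{2023.2.8.1}), namely $X=\bigl(\frac{x+\alpha}{x-\alpha}\bigr)^2$ and $Y=\frac{4\alpha k_2(x+\alpha)y}{(x-\alpha)^5}$, I compute $dX$ in terms of $dx$ and substitute into $\omega_1^C$ and $\omega_3^C$, expressing the results as combinations of $\omega_1^V,\omega_3^V,\omega_5^V$. Equating the two expressions and invoking the linear independence of $\omega_1^V,\omega_3^V,\omega_5^V$ then pins down both rows of $K_2$.

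The main obstacle will be the explicit pullback computation $\varphi_2^*\omega_j^C$: unlike the elliptic case, where the map was rational in $x$ with simple poles, here $X$ is a ratio of squares, so $dX=\frac{d}{dx}\bigl(\frac{x+\alpha}{x-\alpha}\bigr)^2dx$ involves a factor $(x+\alpha)(x-\alpha)^{-3}$ after differentiation, and I must carefully track the $(x-\alpha)$ powers against those appearing in $Y$ through the expression $Y^{-1}=\frac{(x-\alpha)^5}{4\alpha k_2(x+\alpha)y}$. The algebra should collapse cleanly: the key point is that $\frac{X}{2Y}dX$ and $\frac{1}{2Y}dX$ must each reduce to $\mathbb{C}$-linear combinations of $\frac{x^2}{2y}dx$, $\frac{x}{2y}dx$, and $\frac{1}{2y}dx$ (up to the overall normalization constants $-\tfrac12$), with the numerators being polynomials in $x$ of degree at most $2$ after the $(x\pm\alpha)$ factors cancel. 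I expect $\varphi_2^*\omega_1^C$ and $\varphi_2^*\omega_3^C$ to yield the symmetric combinations $-\frac{1}{k_2}(\omega_1^V+2\alpha\omega_3^V+\alpha^2\omega_5^V)$ and $-\frac{1}{k_2}(\omega_1^V-2\alpha\omega_3^V+\alpha^2\omega_5^V)$, giving the two rows of $K_2$ as stated; the $\pm 2\alpha$ asymmetry in the middle column should emerge from the $(x+\alpha)$ versus $(x-\alpha)$ numerator factors in the two forms.
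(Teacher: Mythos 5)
Your proposal follows exactly the same route as the paper's proof: write $K_2$ with unknown entries, compare the pullback of $du_1,du_3$ through $\varphi_{2,*}$ and $A$ against the pullback of $\omega_1^C,\omega_3^C$ through $A_2$ and $\varphi_2$, and conclude by linear independence of $\omega_1^V,\omega_3^V,\omega_5^V$. Your anticipated pullbacks are correct (the computation collapses to $\varphi_2^*\omega_1^C=\frac{(x+\alpha)^2}{2k_2y}dx$ and $\varphi_2^*\omega_3^C=\frac{(x-\alpha)^2}{2k_2y}dx$), matching the paper's stated coefficients.
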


\begin{proof}
Let $K_2=\begin{pmatrix}b_1 &b_3 &b_5\\c_1&c_3&c_5\end{pmatrix}$. 
We consider the commutative diagram (\ref{2022.9.2.11}). 
The pullbacks of the holomorphic 1-forms $du_1$ and $du_3$ on $\operatorname{Jac}(C)$ with respect to $\varphi_{2,*}$ are $b_1du_1+b_3du_3+b_5du_5$ and $c_1du_1+c_3du_3+c_5du_5$, respectively. 
The pullbacks of $b_1du_1+b_3du_3+b_5du_5$ and $c_1du_1+c_3du_3+c_5du_5$ with respect to $A$ are $b_1\omega_1^V+b_3\omega_3^V+b_5\omega_5^V$ and $c_1\omega_1^V+c_3\omega_3^V+c_5\omega_5^V$, respectively. 
On the other hand, the pullbacks of $du_1$ and $du_3$ with respect to $A_2$ are $\omega_1^C$ and $\omega_3^C$, respectively. 
The pullbacks of $\omega_1^C$ and $\omega_3^C$ with respect to $\varphi_2$ are $-\frac{1}{k_2}\omega_1^V-\frac{2\alpha}{k_2}\omega_3^V-\frac{\alpha^2}{k_2}\omega_5^V$ and 
$-\frac{1}{k_2}\omega_1^V+\frac{2\alpha}{k_2}\omega_3^V-\frac{\alpha^2}{k_2}\omega_5^V$, respectively. 
Since $\omega_1^V, \omega_3^V, \omega_5^V$ are linearly independent, we obtain the statement of the lemma. 
\end{proof}

We have $\varphi_1^{-1}(\infty)=\{\infty, (0,0)\}$ and $\varphi_2^{-1}\bigl((1,0)\bigr)=\{\infty, (0,0)\}$.  
It is well known that the maps $\varphi_i$ with $i=1,2$ induce the homomorphisms between the Jacobian varieties 
\[\varphi_1^*\colon\quad \operatorname{Jac}(E)\to\operatorname{Jac}(V),\qquad
\int_{\infty}^P\omega_E\mapsto \int_{\infty}^{P_1}\omega_V+\int_{(0,0)}^{P_2}\omega_V,\]
where $P\in E$ and $\varphi_1^{-1}(P)=\{P_1,P_2\}$, 
\[\varphi_2^*\colon\quad \operatorname{Jac}(C)\to\operatorname{Jac}(V),\qquad
\int_{(1,0)}^Q\omega_C\mapsto \int_{\infty}^{Q_1}\omega_V+\int_{(0,0)}^{Q_2}\omega_V,\]
where $Q\in C$ and $\varphi_2^{-1}(Q)=\{Q_1,Q_2\}$.  
It is known that we have the unique linear maps 
\begin{align*}
&\widetilde{\varphi}_1^*\colon\quad\mathbb{C}\to\mathbb{C}^3,\qquad u\mapsto L_1u,\qquad L_1\in M_{3,1}(\mathbb{C}),\\
&\widetilde{\varphi}_2^*\colon\quad\mathbb{C}^2\to\mathbb{C}^3,\qquad u\mapsto L_2u,\qquad L_2\in M_{3,2}(\mathbb{C})
\end{align*}
such that $\varphi_1^*\circ\pi_E=\pi_V\circ\widetilde{\varphi}_1^*$ and $\varphi_2^*\circ\pi_C=\pi_V\circ\widetilde{\varphi}_2^*$ (e.g., \cite[Proposition~1.2.1]{B-H-2004}). 
\begin{gather*}
 \begin{CD}
 \mathbb{C} @>{\widetilde{\varphi}_1^*}>> \mathbb{C}^3 \\
 @V{\pi_E}VV @V{\pi_V}VV \\
 \operatorname{Jac}(E) @>{\varphi_1^*} >> \operatorname{Jac}(V), 
 \end{CD}\qquad
\begin{CD}
 \mathbb{C}^2 @>{\widetilde{\varphi}_2^*}>> \mathbb{C}^3 \\
 @V{\pi_C}VV @V{\pi_V}VV \\
 \operatorname{Jac}(C) @>{\varphi_2^*} >> \operatorname{Jac}(V). 
 \end{CD}
\end{gather*}
Let 
\begin{align*}
&\varphi_*\colon\quad \operatorname{Jac}(V)\to \operatorname{Jac}(E) \times \operatorname{Jac}(C),\qquad u\mapsto \bigl(\varphi_{1,*}(u), \varphi_{2,*}(u)\bigr),\\
&\varphi^*\colon\quad \operatorname{Jac}(E) \times \operatorname{Jac}(C) \to\operatorname{Jac}(V),\qquad (u,v)\mapsto \varphi_1^*(u)+\varphi_2^*(v). 
\end{align*}

\begin{lem}[{\cite[Section 3]{KT}}]\label{3.9.1}
For $u\in\operatorname{Jac}(V)$, we have
\begin{gather*}
\varphi^*\circ\varphi_*(u)=2u.
\end{gather*}
\end{lem}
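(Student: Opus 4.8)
The plan is to compute $\varphi^*\circ\varphi_*$ directly at the level of the covering vector spaces, using the commutative diagrams and the explicit matrices already established. Since $\pi_V$ is surjective, it suffices to verify the identity on $\mathbb{C}^3$: I claim that the composite linear map $L_1 K_1 + L_2 K_2\colon \mathbb{C}^3\to\mathbb{C}^3$ equals $2\,\mathrm{Id}$. Indeed, $\varphi_*$ is covered by $u\mapsto\bigl(K_1 u, K_2 u\bigr)$ and $\varphi^*$ is covered by $(v,w)\mapsto L_1 v + L_2 w$, so $\varphi^*\circ\varphi_*$ is covered by $u\mapsto (L_1 K_1 + L_2 K_2)u$. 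If this matrix is $2\,\mathrm{Id}$, then applying $\pi_V$ and using $\varphi^*\circ\varphi_*\circ\pi_V = \pi_V\circ(L_1K_1+L_2K_2)$ gives $\varphi^*\circ\varphi_*(u)=2u$ for all $u\in\operatorname{Jac}(V)$.

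To carry this out, first I would assemble the known data: Lemma \ref{3.6.1} gives $K_1=\frac{1}{k_1}(1,0,-\alpha^2)$ and Lemma \ref{3.6.11999} gives $K_2=-\frac{1}{k_2}\begin{pmatrix}1&2\alpha&\alpha^2\\1&-2\alpha&\alpha^2\end{pmatrix}$. The missing ingredients are the matrices $L_1\in M_{3,1}(\mathbb{C})$ and $L_2\in M_{3,2}(\mathbb{C})$ covering the pullback maps $\widetilde{\varphi}_1^*$ and $\widetilde{\varphi}_2^*$. These must be determined, presumably in lemmas that the paper establishes just before this result; the key structural fact is that $\varphi_i^*$ is the transpose (adjoint) of $\varphi_{i,*}$ up to the polarization data, so $L_i$ is governed by the same pullback-of-differentials computation as $K_i$ but read in the opposite direction. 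Concretely, the entries of $L_i$ come from expressing the pulled-back holomorphic forms $\varphi_i^*\omega^{E}$ and $\varphi_i^*\omega^{C}$ in the basis $\omega^V$, which is exactly the information already extracted in the proofs of Lemmas \ref{3.6.1} and \ref{3.6.11999}.

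Once $L_1$ and $L_2$ are in hand, the verification is a finite matrix computation: expand $L_1 K_1$ (a rank-one $3\times 3$ matrix) and $L_2 K_2$ (a product of a $3\times 2$ with a $2\times 3$ matrix), add them, and check that all the off-diagonal entries cancel while the diagonal entries each equal $2$. The cancellations should hinge on the specific symmetric/antisymmetric structure of $K_2$ (the $\pm 2\alpha$ pattern) and on the constants $k_1,k_2$ combining correctly; the factor $2$ reflects that $\varphi_1,\varphi_2$ are degree-$2$ morphisms, so $\varphi^*\circ\varphi_*$ recovers multiplication by the degree on the appropriate sublattices.

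The main obstacle I anticipate is not the final matrix arithmetic but the correct determination and normalization of $L_1$ and $L_2$. Unlike the maps $K_i$, which are pinned down cleanly by pulling back a single coordinate form through $A_1$ or $A_2$, the pullback maps $\varphi_i^*$ between Jacobians send one point to a sum of two points $\int_\infty^{P_1}+\int_{(0,0)}^{P_2}$, and one must track the base-point shift at $(0,0)$ as well as the precise dual pairing to get the constants (the $k_1,k_2$ factors and their signs) exactly right. Getting these normalizations consistent with the conventions in Lemmas \ref{3.6.1} and \ref{3.6.11999}, so that $L_1K_1+L_2K_2$ lands on the nose at $2\,\mathrm{Id}$ rather than at $2\,\mathrm{Id}$ up to some spurious scalar, is where the real care is needed.
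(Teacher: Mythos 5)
There is a genuine gap, and in the context of this paper your route is actually circular. You reduce the lemma to the identity $L_1K_1+L_2K_2=2I_3$, but you never determine $L_1$ and $L_2$; you defer this to ``lemmas that the paper establishes just before this result.'' In fact the paper does the opposite: Lemma \ref{2.15.1}, which computes $L_1$ and $L_2$, is \emph{deduced from} Lemma \ref{3.9.1} via $LK=2I_3$ (so $L=2K^{-1}$). So the one step you leave unproved is precisely the content of the lemma. Moreover, your suggested method for finding $L_i$ is the wrong computation: expressing $\varphi_i^*\omega^E$ and $\varphi_i^*\omega^C$ in the basis $\omega_V$ is exactly what pins down $K_i$ (the analytic representation of $\varphi_{i,*}$), not $L_i$. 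The matrix $L_i$ encodes the \emph{trace} (pushforward) of the forms $\omega^V_{2i-1}$ along $\varphi_i$, i.e.\ the sum of contributions over the two preimage points including the base-point shift at $(0,0)$ --- a different and harder computation that you correctly flag as the ``real care'' needed but do not carry out. (The final arithmetic you describe does check out once $L_1,L_2$ are known: $L_1K_1+L_2K_2=2I_3$ with the values of Lemma \ref{2.15.1}.)

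The paper's proof needs none of this linear algebra. It works directly with divisors: for $P=(x,y)\in V$ with $P\neq\infty,(0,0)$, the second points of the fibres are computed explicitly as $Q=\bigl(\alpha^2/x,\ \alpha^4y/x^4\bigr)$ for $\varphi_1$ and $R=\bigl(\alpha^2/x,\ -\alpha^4y/x^4\bigr)$ for $\varphi_2$. These are hyperelliptic conjugates of each other, so $\int_\infty^Q\omega_V+\int_\infty^R\omega_V=0$ in $\operatorname{Jac}(V)$, and after absorbing the base-point terms (using that $(0,0)$ and $\infty$ are branch points) one gets $\varphi^*\circ\varphi_*\bigl(\int_\infty^P\omega_V\bigr)=2\int_\infty^P\omega_V$. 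The general case follows because every element of $\operatorname{Jac}(V)$ is a sum of three such Abel--Jacobi images and $\varphi^*\circ\varphi_*$ is a homomorphism. If you want to salvage your approach, you must first compute $L_1$ and $L_2$ independently (e.g.\ by the trace-of-differentials computation, tracking the fibres of $\varphi_1$ and $\varphi_2$), at which point you will essentially have redone the paper's divisor argument.
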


\begin{proof}
For the sake to be complete and self-contained, we give a proof of this lemma. 
For $P=(x,y)\in V\backslash\{\infty\}$, let $\varphi_1^{-1}\bigl(\varphi_1(P)\bigr)=\{P, Q\}$ and $\varphi_2^{-1}\bigl(\varphi_2(P)\bigr)=\{P,R\}$. 
If $P\neq(0,0)$, then we have $Q=\Bigl(\frac{\alpha^2}{x}, \frac{\alpha^4y}{x^4}\Bigr)$ and $R=\Bigl(\frac{\alpha^2}{x}, -\frac{\alpha^4y}{x^4}\Bigr)$. 
If $P=(0,0)$, then we have $Q=R=\infty$. 
We have 
\begin{align*}
&\varphi^*\circ\varphi_*\Biggl(\int_{\infty}^P\omega_V\Biggr)=\varphi^*\Biggl(\int_{\infty}^{\varphi_1(P)}\omega_E, \int_{(1,0)}^{\varphi_2(P)}\omega_C\Biggr)\\
&=\int_{\infty}^P\omega_V+\int_{(0,0)}^Q\omega_V+\int_{\infty}^P\omega_V+\int_{(0,0)}^R\omega_V \\
&=2\int_{\infty}^P\omega_V+\int_{\infty}^Q\omega_V+\int_{\infty}^R\omega_V=2\int_{\infty}^P\omega_V.
\end{align*}
For $u\in\operatorname{Jac}(V)$, there exist $Z_1, Z_2, Z_3\in V$ such that 
\[\sum_{i=1}^3\int_{\infty}^{Z_i}\omega_V=u.\]
Since the map $\varphi^*\circ\varphi_*$ is a homomorphism, we obtain the statement of the lemma. 
\end{proof}

\begin{lem}\label{2.15.1}
We have $L_1=k_1\begin{pmatrix}1\\0\\-\alpha^{-2}\end{pmatrix}$ and $L_2=\displaystyle{-\frac{k_2}{2}\begin{pmatrix}1&1\\ \alpha^{-1}&-\alpha^{-1}\\ \alpha^{-2} &\alpha^{-2}\end{pmatrix}}$. 
\end{lem}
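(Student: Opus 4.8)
The plan is to turn the statement into a single identity of $3\times 3$ matrices and then invert. The linear maps $\widetilde{\varphi}_{i,*}$ and $\widetilde{\varphi}_i^*$ are the unique linear lifts of the torus homomorphisms $\varphi_{i,*}$ and $\varphi_i^*$, and such lifts compose: writing $\mathcal{K}=\begin{pmatrix}K_1\\ K_2\end{pmatrix}\in M_{3,3}(\mathbb{C})$ for the lift of $\varphi_*$ and $\mathcal{L}=(L_1\mid L_2)\in M_{3,3}(\mathbb{C})$ for the lift of $\varphi^*$, the composite $\varphi^*\circ\varphi_*$ lifts to $\mathcal{L}\mathcal{K}=L_1K_1+L_2K_2$. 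Since Lemma~\ref{3.9.1} gives $\varphi^*\circ\varphi_*=2\,\mathrm{id}$ and the unique linear lift of multiplication by $2$ on $\operatorname{Jac}(V)$ is $2I_3$, I obtain the exact matrix identity $L_1K_1+L_2K_2=2I_3$ on $\mathbb{C}^3$.

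With $K_1$ and $K_2$ supplied by Lemmas~\ref{3.6.1} and~\ref{3.6.11999}, this becomes a linear system in the nine entries of $L_1$ and $L_2$. The point is that $\mathcal{K}$ is invertible: a direct expansion gives $\det\mathcal{K}=8\alpha^3/(k_1k_2^2)$, which is nonzero because $\alpha\neq 0$. Hence the relation $\mathcal{L}\mathcal{K}=2I_3$ forces $\mathcal{L}=2\mathcal{K}^{-1}$, so $L_1$ and $L_2$ are uniquely determined; reading off the first column and the last two columns of $2\mathcal{K}^{-1}$ yields the asserted formulas. Equivalently, and this is all I would actually write out, one checks by direct multiplication that the matrices in the statement satisfy $L_1K_1+L_2K_2=2I_3$, which by invertibility of $\mathcal{K}$ identifies them as the unique solution.

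The only genuine subtlety, and the step I would be most careful about, is the claim that the single product identity $L_1K_1+L_2K_2=2I_3$ determines $L_1$ and $L_2$ separately; this is exactly what invertibility of the square matrix $\mathcal{K}$ guarantees. If one prefers an argument that separates the two unknowns at the outset, one can instead evaluate the four compositions $\varphi_{i,*}\circ\varphi_j^*$ directly. The diagonal ones equal multiplication by $\deg\varphi_i=2$, giving $K_1L_1=2$ and $K_2L_2=2I_2$; the off-diagonal ones vanish, giving $K_1L_2=0$ and $K_2L_1=0$. The vanishing comes from the deck involutions: if $\tau_j$ denotes the deck transformation of $\varphi_j$ and $\iota$ the hyperelliptic involution of $V$, then $\tau_2=\iota\circ\tau_1$, so for $i\neq j$ the two points of a $\varphi_j$-fibre are carried by $\varphi_i$ to a pair interchanged by the target's hyperelliptic involution, whose classes cancel in the target Jacobian (the two pullback base points both lie over a single point, so the bookkeeping matches). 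Together these four block relations read $\mathcal{K}\mathcal{L}=2I_3$, and invertibility of $\mathcal{K}$ again yields the stated $L_1$ and $L_2$.
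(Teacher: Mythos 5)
Your main argument is precisely the paper's proof: it forms $K=\begin{pmatrix}K_1\\K_2\end{pmatrix}$ and $L=\begin{pmatrix}L_1&L_2\end{pmatrix}$, uses the compatibility of linear lifts with composition together with Lemma \ref{3.9.1} to get $LK=2I_3$, and then solves using Lemmas \ref{3.6.1} and \ref{3.6.11999}; your explicit check that $\det K=8\alpha^3/(k_1k_2^2)\neq0$ just makes the invertibility step (left implicit in the paper) fully precise. The alternative route via the four compositions $\varphi_{i,*}\circ\varphi_j^*$ is a correct variant but is not needed.
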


\begin{proof}
Let $K$ and $L$ be the $3\times 3$ matrices defined by $K=\begin{pmatrix}K_1\\K_2\end{pmatrix}$ and $L=\begin{pmatrix}L_1&L_2\end{pmatrix}$. 
We consider the maps
\begin{align*}
&\widetilde{\varphi}_*\colon\quad \mathbb{C}^3\to\mathbb{C}^3,\qquad u\mapsto Ku,\\
&\widetilde{\varphi}^*\colon\quad \mathbb{C}^3\to\mathbb{C}^3,\qquad u\mapsto Lu,\\
&\pi_{E,C}\colon\quad \mathbb{C}^3\to\operatorname{Jac}(E)\times\operatorname{Jac}(C),\qquad {}^t(u_1,u_3,u_5)\mapsto\Bigl(\pi_E(u_1), \pi_C\bigl({}^t(u_3,u_5)\bigr)\Bigr).
\end{align*}
Then we have the following commutative diagrams:
\begin{gather*}
 \begin{CD}
 \mathbb{C}^3 @>{\widetilde{\varphi}_*}>> \mathbb{C}^3 \\
 @V{\pi_V}VV @V{\pi_{E,C}}VV \\
 \operatorname{Jac}(V) @>{\varphi_*} >>\operatorname{Jac}(E)\times\operatorname{Jac}(C),
 \end{CD}\qquad
\begin{CD}
 \mathbb{C}^3 @>{\widetilde{\varphi}^*}>> \mathbb{C}^3 \\
 @V{\pi_{E,C}}VV @V{\pi_V}VV \\
 \operatorname{Jac}(E)\times\operatorname{Jac}(C) @>{\varphi^*} >> \operatorname{Jac}(V).
 \end{CD}
\end{gather*}
Therefore, we obtain the following commutative diagram:
\begin{gather*}
 \begin{CD}
 \mathbb{C}^3 @>{\widetilde{\varphi}^*\circ\widetilde{\varphi}_*}>> \mathbb{C}^3 \\
 @V{\pi_{V}}VV @V{\pi_{V}}VV \\
 \operatorname{Jac}(V) @>{\varphi^*\circ\varphi_*} >> \operatorname{Jac}(V).
 \end{CD}
\end{gather*}
We have
\[
\widetilde{\varphi}^*\circ\widetilde{\varphi}_*\colon\quad \mathbb{C}^3\to\mathbb{C}^3,\qquad u\mapsto LKu.
\]
From Lemma \ref{3.9.1}, we obtain
\[
LK=\begin{pmatrix}2&0&0\\0&2&0\\0&0&2\end{pmatrix}.
\]
From Lemmas \ref{3.6.1} and \ref{3.6.11999}, we obtain the statement of the lemma.
\end{proof}

\section{Reduction of the hyperelliptic functions associated with $V$}

For simplicity, we denote $\wp_{1,1}^E$ by $\wp$. 
Let $\wp'=\frac{d}{du}\wp$.

\begin{thm}\label{2023.2.21.2}

For $u\in\mathbb{C}$, the following relations hold$:$
\begin{align*}
\wp_{1,1}^V(L_1u)&=\alpha^2+1+\wp(u)/k_1^2,\qquad \wp_{1,3}^V(L_1u)=-\alpha^2,\qquad \wp_{1,5}^V(L_1u)=0,\\
\wp_{1,1,1}^V(L_1u)&=\wp'(u)/k_1^3,\qquad \wp_{1,1,3}^V(L_1u)=0,\qquad \wp_{1,1,5}^V(L_1u)=0.
\end{align*}
\end{thm}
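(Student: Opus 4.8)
The plan is to reduce everything to the single-variable function theory on $E$ by restricting the sigma-function identity for $V$ to the line $L_1u$, $u\in\mathbb{C}$. The key geometric input is Lemma \ref{2.15.1}, which tells us $L_1={}^t(k_1,0,-k_1/\alpha^2)$, and the commutative diagram relating $\varphi_1^*$ to $\widetilde{\varphi}_1^*$; together with $\varphi_{1,*}\circ\varphi_1^*=\mathrm{id}$ on $\operatorname{Jac}(E)$ (which follows from the degree-$2$ map $\varphi_1$ having $\varphi_1\circ\varphi_1^*$ act as multiplication on $E$, compatibly with Lemma \ref{3.9.1}). The upshot is that $u\mapsto L_1u$ parametrizes a one-dimensional abelian subvariety of $\operatorname{Jac}(V)$ isomorphic to $\operatorname{Jac}(E)$, so every $\wp^V_{1,i}(L_1u)$ is an elliptic function of $u$ with periods coming from $\operatorname{Jac}(E)$.

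\textbf{Main computation.}
First I would identify the pulled-back coordinate. Since $K_1=\tfrac1{k_1}(1,0,-\alpha^2)$ (Lemma \ref{3.6.1}) is a left inverse of $L_1$ up to the factor recorded in $LK=2I$, the variable $u_1$ on $\operatorname{Jac}(E)$ restricts along $L_1$ to $u_1=k_1u$, $u_3=0$, $u_5=-k_1u/\alpha^2$. Next I would use the chain rule: writing $\Phi(u)=\log\sigma_V(L_1u)$, one has $\tfrac{d}{du}=k_1\,\partial_{u_1}-\tfrac{k_1}{\alpha^2}\,\partial_{u_5}$ acting on functions on $\mathbb{C}^3$ evaluated at $L_1u$. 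The central claim is that $\sigma_V(L_1u)$ is, up to a Gaussian exponential factor and a nonvanishing constant, equal to $\sigma_E(u)$. This should follow from the sigma-function definition: the theta factor is pulled back via $\widetilde{\varphi}_1^*$ to the genus-$1$ theta function with the induced characteristics, and the quadratic exponent $\tfrac12\,{}^t(L_1u)\eta'_V(\omega'_V)^{-1}(L_1u)$ is a quadratic polynomial $qu^2$ in the single variable $u$. Concretely I expect
\[
\sigma_V(L_1u)=c\,e^{qu^2}\sigma_E(u)
\]
for constants $c\neq0$ and $q\in\mathbb{C}$. Taking $-\tfrac{d^2}{du^2}\log$ of both sides then gives
\[
k_1^2\wp^V_{1,1}(L_1u)-2\tfrac{k_1^2}{\alpha^2}\wp^V_{1,5}(L_1u)+\tfrac{k_1^2}{\alpha^4}\wp^V_{5,5}(L_1u)=\wp(u)-2q,
\]
the left side being the second $u$-derivative expanded by the chain rule.

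\textbf{Pinning down individual components.}
One identity is not enough to separate the six unknowns, so the remaining step is to supply enough linear relations. I would compute the orders of poles: each $\wp^V_{1,i}(L_1u)$ is elliptic with poles only where $\sigma_V(L_1u)=0$, i.e. on the theta divisor pulled back along $L_1$, which corresponds to the single point $u=0$ on $E$. Since $\wp$ has a double pole there and no other, matching principal parts forces the third-derivative functions $\wp^V_{1,1,i}(L_1u)$ (which are $k_1$ times $\partial_u$ of the above, by the chain rule together with $\wp^V_{1,3}$, $\wp^V_{1,5}$ being constant) to be scalar multiples of $\wp'(u)$ or constants. The cleanest route to the \emph{individual} values $\wp^V_{1,3}(L_1u)=-\alpha^2$, $\wp^V_{1,5}(L_1u)=0$, etc., is to evaluate at a base point and use the normalization of the sigma expansion: near $u=0$ the Taylor/Laurent coefficients of $\wp^V_{i,j}$ are universal polynomials in the curve coefficients $\lambda^V$, and substituting $u_3=0,\ u_5=-u_1/\alpha^2$ collapses these to the stated constants.

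\textbf{Anticipated obstacle.}
The hard part will be justifying the clean factorization $\sigma_V(L_1u)=c\,e^{qu^2}\sigma_E(u)$ and, relatedly, showing that the mixed functions $\wp^V_{1,3},\wp^V_{1,5}$ are genuinely \emph{constant} (not merely elliptic) along $L_1$. Constancy must come from a pole-order argument: these functions have at worst a simple pole at $u=0$, but an elliptic function with a single simple pole per period cell is constant, so they reduce to their value, which one reads off from the expansion. Carrying the period and characteristic bookkeeping through $\widetilde{\varphi}_1^*$ precisely enough to identify the exponent $q$ — and hence to get the exact constant $\alpha^2+1$ in $\wp^V_{1,1}$ rather than an undetermined additive constant — is where the real care is needed; I would handle it by comparing the $u^{-2}$ and $u^0$ coefficients of the Laurent expansions on both sides.
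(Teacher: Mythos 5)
There is a genuine gap, and it sits at the centre of your argument. The claimed factorization $\sigma_V(L_1u)=c\,e^{qu^2}\sigma_E(u)$ cannot hold: for the degree-$2$ cover $\varphi_1\colon V\to E$ the pullback of the principal polarization of $\operatorname{Jac}(V)$ along $\varphi_1^*$ is algebraically equivalent to $2\Theta_E$ (the divisor-theoretic counterpart of $\varphi_{1,*}\circ\varphi_1^*=2\cdot\mathrm{id}$, cf.\ Lemma \ref{3.9.1}), so $u\mapsto\sigma_V(L_1u)$ vanishes on a divisor of degree $2$ per period parallelogram while $\sigma_E$ has a single simple zero. Consequently your identity $k_1^2\wp_{1,1}^V-2k_1^2\alpha^{-2}\wp_{1,5}^V+k_1^2\alpha^{-4}\wp_{5,5}^V=\wp(u)-2q$ is false: the left-hand side is $-\tfrac{d^2}{du^2}\log\sigma_V(L_1u)$, whose double poles have leading Laurent coefficients summing to $2$ over a period cell, whereas for $\wp(u)-2q$ this sum is $1$. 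Even after the repair (replace $\sigma_E$ by $\sigma_E^2$), you are left with a single scalar identity containing the extra unknown $\wp_{5,5}^V(L_1u)$, and — as you concede — no mechanism to isolate the six functions in the statement. The proposed fallbacks do not close this: the assertion that $\wp_{1,3}^V(L_1u)$ and $\wp_{1,5}^V(L_1u)$ have at worst a simple pole at $u=0$ is unjustified (they are second logarithmic derivatives restricted to a line meeting the theta divisor, so a priori they have double poles there; their constancy is precisely what must be proved), and an appeal to ``universal polynomials in the curve coefficients'' will not produce the specific values $\alpha^2+1$, $-\alpha^2$, $0$ without a computation you have not set up.

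The paper's proof takes a different and much more direct route, which is the one to follow. For $P=(X,Y)\in E$ generic with $\varphi_1^{-1}(P)=\{P_1,P_2\}$, one has $\varphi_1^*\bigl(A_1(P)\bigr)=\int_\infty^{P_1}\omega_V+\int_\infty^{P_2}\omega_V+\int_\infty^{(0,0)}\omega_V$, the Abel image of the three points $P_1,P_2,(0,0)$. The classical solution of the Jacobi inversion problem then gives $\wp_{1,1}^V,\wp_{1,3}^V,\wp_{1,5}^V$ at this point as the elementary symmetric functions of $x_1,x_2,0$, and the $\wp_{1,1,i}^V$ from a $3\times3$ linear system in the $y$-coordinates. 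Finally, (\ref{2023.1.9.1}) shows that $x_1,x_2$ are the roots of $k_1^2x^2-\bigl(k_1^2(1+\alpha^2)+X\bigr)x+k_1^2\alpha^2=0$, so $x_1+x_2=1+\alpha^2+X/k_1^2$ and $x_1x_2=\alpha^2$; substituting $X=\wp(u)$ and $Y=-\wp'(u)/2$ yields all six formulas simultaneously on an open set, and they extend to all of $\mathbb{C}$ by analyticity. Note that this argument never needs any statement about $\sigma_V$ along the line $L_1\mathbb{C}$.
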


\begin{proof}
Let $U$ be the subset of $E$ consisting of the elements $P\in E\backslash\{\infty\}$ such that $X\neq-k_1^2(\alpha\pm 1)^2$, 
where $P=(X,Y)$. 
Then $U$ is an open set in $E$. 
Let $U'=A_1(U)$. 
Then $U'$ is an open set in $\operatorname{Jac}(E)$. 
We~take a point $u\in U'$. 
Then there exists $P=(X,Y)\in U$ such that $u=A_1(P)$. 
We have
\begin{gather*}
\varphi_1^*(u)=\int_{\infty}^{P_1}\omega_V+\int_{(0,0)}^{P_2}\omega_V=\int_{\infty}^{P_1}\omega_V+\int_{\infty}^{P_2}\omega_V+\int_{\infty}^{(0,0)}\omega_V,
\end{gather*}
where $\varphi_1^{-1}(P)=\{P_1,P_2\}$. 
From $P\neq\infty$, we have $P_1, P_2\neq\infty, (0,0)$.
Let $P_i=(x_i,y_i)$ for $i=1,2$.
From $P\in U$, we have $x_1\neq x_2$. 
Therefore, from the well-known solution of the Jacobi inversion problem, we have
\begin{gather*}
\wp_{1,1}^V(L_1u)=\wp_{1,1}^V\bigl(\varphi_1^*(u)\bigr)=x_1+x_2,\qquad \wp_{1,3}^V(L_1u)=\wp_{1,3}^V\bigl(\varphi_1^*(u)\bigr)=-x_1x_2,\\
\wp_{1,5}^V(L_1u)=\wp_{1,5}^V\bigl(\varphi_1^*(u)\bigr)=0,\\
\begin{pmatrix}2y_1\\2y_2\\0\end{pmatrix}=-\begin{pmatrix}x_1^2&x_1&1\\x_2^2&x_2&1\\0&0&1\end{pmatrix}\begin{pmatrix}\wp_{1,1,1}^V(L_1u)\\\wp_{1,1,3}^V(L_1u)\\\wp_{1,1,5}^V(L_1u)\end{pmatrix}.
\end{gather*}
From (\ref{2023.1.9.1}), $X=\wp(u)$, and $Y=-\wp'(u)/2$, the relations in Theorem \ref{2023.2.21.2} hold for $u\in\pi_E^{-1}(U')$.
Since $\pi_E^{-1}(U')$ is an open set in $\mathbb{C}$, the relations in Theorem \ref{2023.2.21.2} hold on $\mathbb{C}$.
\end{proof}

For $0\le i\le 6$, we define $\mu_{2i}$ by 
\[f(x)=\mu_0x^7+\mu_2x^6+\mu_4x^5+\mu_6x^4+\mu_8x^3+\mu_{10}x^2+\mu_{12}x,\]
where $f(x)$ is defined by (\ref{2023.2.21.1}). 

\begin{thm}\label{2023.2.10.1}
Let $u\in\mathbb{C}^2$. 
For simplicity, we set $p_2=\wp_{1,1}^C(u)$, $p_3=\wp_{1,1,1}^C(u)$, $p_4=\wp_{1,3}^C(u)$, and $p_5=\wp_{1,1,3}^C(u)$.  
The following relations hold$:$ 
\begin{align*}
\wp_{1,1}^V(L_2u)&=\frac{D_1^2}{16k_2^2p_4^2(1-p_2-p_4)^2}-\mu_2-h_1,\\
\wp_{1,3}^V(L_2u)&=\frac{\alpha D_1D_2}{8k_2^2p_4^2(1-p_2-p_4)^3}-\mu_4-\mu_2h_1-h_2,\\
\wp_{1,5}^V(L_2u)&=\frac{\alpha^2(D_3p_3^2p_4^2-2D_4p_3p_4p_5+D_5p_5^2)}{16k_2^2p_4^2(1-p_2-p_4)^4}-\mu_6-\mu_4h_1-\mu_2h_2-h_3,\\
\wp_{1,1,1}^V(L_2u)&=\frac{\alpha(p_2-3p_4+3)(p_2p_5-p_5-p_3p_4)}{2k_2p_4(p_2+p_4-1)}+\wp_{1,1}^V(L_2u)D_6,\\
\wp_{1,1,3}^V(L_2u)&=\frac{\alpha^2(3p_2p_4p_5-p_4p_5+p_2^2p_5+2p_2p_5-3p_5-3p_3p_4^2-p_2p_3p_4-p_3p_4)}{2k_2p_4(p_2+p_4-1)}\\
&\quad +\wp_{1,3}^V(L_2u)D_6,\\
\wp_{1,1,5}^V(L_2u)&=\frac{\alpha^3(p_3p_4-p_2p_5+p_5)}{2k_2p_4}+\wp_{1,5}^V(L_2u)D_6,
\end{align*}
where 
\begin{align*}
D_1&=p_2p_4p_5-3p_4p_5-p_2^2p_5+2p_2p_5-p_5-p_3p_4^2+p_2p_3p_4-3p_3p_4,\\
D_2&=p_2p_4^2p_5+3p_4^2p_5+4p_2^2p_4p_5-14p_2p_4p_5+14p_4p_5-p_2^3p_5+p_2^2p_5\\
&\quad +p_2p_5-p_5-p_3p_4^3-4p_2p_3p_4^2+14p_3p_4^2+p_2^2p_3p_4+3p_3p_4,\\
D_3&=p_4^4+4(p_2+7)p_4^3+2(19p_2^2-118p_2+243)p_4^2\\
&\quad -4(7p_2^3-37p_2^2+77p_2-119)p_4+p_2^4-4p_2^3+6p_2^2+28p_2+33,\\
D_4&=(p_2-1)p_4^4+4(p_2^2+14p_2-47)p_4^3+2(19p_2^3-137p_2^2+329p_2-323)p_4^2\\
&\quad -4(7p_2^4-36p_2^3+74p_2^2-76p_2+47)p_4+(p_2-1)^5,\\
D_5&=(p_2^2-2p_2+33)p_4^4+4(p_2^3+21p_2^2-101p_2+119)p_4^3\\
&\quad +2(19p_2^4-156p_2^3+418p_2^2-492p_2+243)p_4^2-28(p_2-1)^5p_4+(p_2-1)^6,\\
D_6&=\frac{p_2p_4p_5-3p_4p_5-p_2^2p_5+2p_2p_5-p_5-p_3p_4^2+p_2p_3p_4-3p_3p_4}{2k_2p_4(p_2+p_4-1)},\\
h_1&=\frac{4\alpha(p_4+1)}{p_2+p_4-1},\qquad h_2=\frac{2\alpha^2(5p_4^2-2p_2p_4+22p_4+p_2^2+2p_2+5)}{(p_2+p_4-1)^2},\\
h_3&=\frac{4\alpha^3(p_4+1)(5p_4^2-6p_2p_4+54p_4+5p_2^2+6p_2+5)}{(p_2+p_4-1)^3}.
\end{align*}

\end{thm}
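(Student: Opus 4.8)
The plan is to imitate the proof of Theorem \ref{2023.2.21.2}, with one essential new feature: each fibre of $\varphi_2$ consists of two points of $V$, so a degree-$2$ divisor on $C$ pulls back to a degree-$4$ divisor on $V$, and the core of the argument is the reduction of this degree-$4$ divisor together with the evaluation of the hyperelliptic functions via the formula of \cite{B-K-2020}. First I would restrict to a dense open subset of $\mathbb{C}^2$ on which the Jacobi inversion problem for $C$ is solvable: for generic $u$ there exist two points $Q^{(1)}=(X^{(1)},Y^{(1)})$ and $Q^{(2)}=(X^{(2)},Y^{(2)})$ on $C$ with $\pi_C(u)=A_2(Q^{(1)})+A_2(Q^{(2)})$, and the solution of the inversion problem gives $X^{(1)}+X^{(2)}=p_2$, $X^{(1)}X^{(2)}=-p_4$ and $-2Y^{(j)}=X^{(j)}p_3+p_5$ for $j=1,2$, so that all six data $X^{(j)},Y^{(j)}$ are expressed through $p_2,p_3,p_4,p_5$.

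Next I would transfer the divisor to $V$. Using the explicit form (\ref{2023.2.8.1}) of $\varphi_2$, the fibre $\varphi_2^{-1}(Q^{(j)})$ consists of the two points whose $x$-coordinates are $\alpha(s_j+1)/(s_j-1)$ and $\alpha(s_j-1)/(s_j+1)$ with $s_j^2=X^{(j)}$; these satisfy $x\cdot x'=\alpha^2$, reflecting the involution $(x,y)\mapsto(\alpha^2/x,-\alpha^4y/x^4)$ appearing in Lemma \ref{3.9.1}, and the corresponding $y$-coordinates are obtained by solving the second component of (\ref{2023.2.8.1}) for $y$. The bookkeeping point is that $(1,0)$ is a Weierstrass point of $C$ and $(0,0),\infty$ are Weierstrass points of $V$, so the relevant base-point differences are $2$-torsion and, occurring an even number of times, cancel in the Jacobian. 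Hence $\varphi_2^*(\pi_C(u))=\pi_V\bigl(A(D)\bigr)$, where $A$ is the Abel--Jacobi map of $V$ with base point $\infty$ and $D$ is the degree-$4$ divisor of the four preimage points; by Lemma \ref{2.15.1} this point is $\pi_V(L_2u)$.

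Now I would invoke the reduction underlying \cite{B-K-2020}. Let $\ell(x)=c_3x^3+c_2x^2+c_1x+c_0$ be the unique cubic interpolating the four preimage points, so that $y-\ell(x)$ vanishes on $D$; then $f(x)-\ell(x)^2=R(x)Q(x)$, where $R(x)=\prod_{i=1}^4(x-x_i)$ has the four preimage roots and $Q(x)=x^3-\wp_{1,1}^V x^2-\wp_{1,3}^V x-\wp_{1,5}^V$ cuts out the reduced degree-$3$ divisor $\widetilde D$ with $A(\widetilde D)=A(D)$. Reading off the top coefficients gives $\wp_{1,1}^V=c_3^2-\mu_2-\sum_i x_i$, and $\wp_{1,3}^V,\wp_{1,5}^V$ come from the lower coefficients of $f-\ell^2$ after dividing by $R$, which produces the $\mu_4,\mu_6$ terms. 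For the three-index functions I would use that the points of $\widetilde D$ are $(\tilde x_j,-\ell(\tilde x_j))$, whence the inversion identity $2\ell(x)\equiv \wp_{1,1,1}^V x^2+\wp_{1,1,3}^V x+\wp_{1,1,5}^V \pmod{Q(x)}$; comparing coefficients yields $\wp_{1,1,2i-1}^V=2c_3\,\wp_{1,2i-1}^V+2c_{3-i}$ for $i=1,2,3$, which already explains the common factor $D_6=2c_3$ multiplying $\wp_{1,2i-1}^V$ in the statement.

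It then remains to carry out the computation and simplify. The elementary symmetric functions of the four $x$-coordinates are rational in $p_2,p_4$; for instance $\sum_i x_i=4\alpha(p_4+1)/(p_2+p_4-1)=h_1$ and $\prod_i x_i=\alpha^4$, and these four symmetric functions determine $R(x)$. Substituting the coordinates into the interpolation determines the $c_k$: one finds $c_3=D_1/\bigl(4k_2p_4(p_2+p_4-1)\bigr)$, so that $c_3^2$ is exactly the rational part of $\wp_{1,1}^V$, and $c_0=\alpha^3(p_3p_4-p_2p_5+p_5)/(4k_2p_4)$, matching the constant term of $\wp_{1,1,5}^V$. Finally the division $Q=(f-\ell^2)/R$ and the reductions modulo $Q$ produce the polynomials $D_1,\dots,D_5$ and $h_1,h_2,h_3$ after collecting terms; the identities hold on a dense open set and hence on all of $\mathbb{C}^2$ by analyticity. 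I expect the main obstacle to be precisely this last step: although each of the four points individually involves the irrationalities $s_j=\sqrt{X^{(j)}}$, every quantity entering $\ell$, $R$ and $Q$ must be shown to collapse to a rational function of $p_2,p_3,p_4,p_5$, and the interpolation, squaring and polynomial division then generate the very large polynomials $D_i$; this is a substantial symbolic computation, best organized through the symmetric functions of the $X^{(j)}$ and $Y^{(j)}$ and almost certainly executed with computer algebra.
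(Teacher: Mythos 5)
Your proposal is correct and follows essentially the same route as the paper: solve the Jacobi inversion problem on $C$, pull the resulting degree-$2$ divisor back to a degree-$4$ divisor on $V$ (using that the base points $\infty$, $(0,0)$, $(1,0)$ are branch points so the extra terms are $2$-torsion and cancel), and then evaluate $\wp_{1,i}^V$ and $\wp_{1,1,i}^V$ on the image of four points via the interpolation-polynomial formula of \cite{B-K-2020}, finishing with a symbolic computation and extension by analyticity. The only difference is presentational: the paper quotes the closed-form expressions $\mathcal{H}(x)$ and $\mathcal{L}(x)$ from \cite[Theorem 3.1]{B-K-2020}, whereas you re-derive the same reduction from the factorization $f(x)-\ell(x)^2=R(x)Q(x)$, and your structural checks (e.g., $\wp_{1,1}^V=c_3^2-\mu_2-h_1$, $D_6=2c_3$) correctly match the stated formulas.
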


\begin{proof}
Let $\mbox{Sym}^2(C)$ be the 2-fold symmetric product of $C$. 
Then $\mbox{Sym}^2(C)$ is a complex manifold of dimension 2. 
We consider the holomorphic map
\begin{gather*}
J\colon\quad \mbox{Sym}^2(C)\to\operatorname{Jac}(C),\qquad (P_1,P_2)\mapsto \sum_{i=1}^2\int_{(1,0)}^{P_i}\omega_C. 
\end{gather*}
Let $U$ be the subset of $\operatorname{Sym}^2(C)$ consisting of the elements $(P_1,P_2)$ such that $P_1,P_2\in C\backslash\{\infty\}$, $X_1,X_2\neq1$, $X_1\neq X_2$, and $X_1X_2\neq0$, where $P_i=(X_i,Y_i)$ for $i=1,2$. 
Then $U$ is an open set in $\operatorname{Sym}^2(C)$. 
Let $U'=J(U)$. 
Then $U'$ is an open set in $\operatorname{Jac}(C)$. 
We~take a point $u\in U'$. 
Then there exists $(P_1,P_2)\in U$ such that $u=J\bigl((P_1,P_2)\bigr)$. 
We have
\begin{align*}
\varphi_2^*(u)&=\int_{\infty}^{S_1}\omega_V+\int_{(0,0)}^{S_2}\omega_V+\int_{\infty}^{S_3}\omega_V+\int_{(0,0)}^{S_4}\omega_V \\ 
&=\int_{\infty}^{S_1}\omega_V+\int_{\infty}^{S_2}\omega_V+\int_{\infty}^{S_3}\omega_V+\int_{\infty}^{S_4}\omega_V,
\end{align*}
where $\varphi_2^{-1}(P_1)=\{S_1,S_2\}$ and $\varphi_2^{-1}(P_2)=\{S_3, S_4\}$. 
From $X_1, X_2\neq1$, we have $S_i\neq\infty$ for $1\le i\le 4$. 
Let $S_i=(x_i,y_i)$ for $1\le i\le 4$. 
From $(P_1,P_2)\in U$, we have $x_i\neq x_j$ for any $i,j$ satisfying $i\neq j$. 
From \cite[Theorem 3.1]{B-K-2020}, we have 
\begin{align*}
\wp_{1,i}^V(L_2u)&=\wp_{1,i}^V\bigl(\varphi_2^*(u)\bigr)=\mathcal{H}_{i+1},\qquad i=1,3,5,\\
\wp_{1,1,i}^V(L_2u)&=\wp_{1,1,i}^V\bigl(\varphi_2^*(u)\bigr)=2\mathcal{L}_{i+2},\qquad i=1,3,5,
\end{align*}
where $\mathcal{H}(x)=x^3-\mathcal{H}_2x^2-\mathcal{H}_4x-\mathcal{H}_6$ with 
\[\mathcal{H}(x)=-\sum_{1\le k<l\le4}\frac{(y_k-y_l)^2}{(x_k-x_l)^2}\prod_{i=1,\;i\neq k,l}^4\frac{x-x_i}{(x_k-x_i)(x_l-x_i)}+\sum_{i=0}^3x^i\sum_{j=0}^{3-i}\mu_{6-2i-2j}\widetilde{h}_j\]
and $\mathcal{L}(x)=\mathcal{L}_3x^2+\mathcal{L}_5x+\mathcal{L}_7$ with 
\[\mathcal{L}(x)=\sum_{i=1}^4y_i\frac{\prod_{j=1,\;j\neq i}^4(x-x_j)-\mathcal{H}(x)}{\prod_{j=1,\;j\neq i}^4(x_i-x_j)}.\]
Here, $\widetilde{h}_j$ denotes the complete symmetric polynomial of degree $j$ in $\{x_i\}_{i=1}^4$. 
From (\ref{2023.2.8.1}) and 
\[X_1+X_2=\wp_{1,1}^C(u),\quad X_1X_2=-\wp_{1,3}^C(u),\quad 2Y_i=-\wp_{1,1,1}^C(u)X_i-\wp_{1,1,3}^C(u),\quad i=1,2,\]
the relations in Theorem \ref{2023.2.10.1} hold for $u\in\pi_C^{-1}(U')$.
Since $\pi_C^{-1}(U')$ is an open set in $\mathbb{C}^2$, the relations in Theorem \ref{2023.2.10.1} hold on $\mathbb{C}^2$.
\end{proof}

For $u\in\mathbb{C}^3$, let $\mathfrak{c}_i(u)$ with $1\le i\le 4$ be the 3-dimensional vectors defined by 
\[\mathfrak{c}_1(u)=\begin{pmatrix}\wp_{1,5}^V(u)\\\wp_{1,3}^V(u)\\\wp_{1,1}^V(u)\end{pmatrix},\quad \mathfrak{c}_2(u)=\begin{pmatrix}\wp_{1,1,5}^V(u)/2\\\wp_{1,1,3}^V(u)/2\\\wp_{1,1,1}^V(u)/2\end{pmatrix},\quad \mathfrak{c}_{i+2}(u)=G(u)\mathfrak{c}_i(u),\quad 
1\le i\le 2,\]
where $G(u)=\begin{pmatrix}0&0&\wp_{1,5}^V(u)\\1&0&\wp_{1,3}^V(u)\\0&1&\wp_{1,1}^V(u)\end{pmatrix}$. 
Let us define the $3\times3$ matrix $B(u)=\bigl(\mathfrak{c}_1(u), \mathfrak{c}_2(u), \mathfrak{c}_3(u)\bigr)$. 
For a square matrix $A$, we denote the determinant of $A$ by $|A|$. 
For $u, v\in\mathbb{C}^3$, let 
\[R(x,y)=\left|\begin{array}{ccc}\begin{matrix}1&x&x^2\end{matrix}&\begin{matrix}x^3&y&x^4\end{matrix}&xy\\I_3&B(u)&\mathfrak{c}_4(u)\\I_3&B(v)&\mathfrak{c}_4(v)\end{array}\right|/|B(v)-B(u)|,\]
where $I_3$ is the identity matrix of size $3$.  
There exists a polynomial $\Phi(w_1,w_2)$ in variables $w_1$ and $w_2$ such that $\Phi(x,y^2)=-R(x,y)R(x,-y)$.  
Let us represent $R(x,y)$ as a linear combination of monomials: 
\[R(x,y)=(x+\mathfrak{d}_2)y+x^3(\mathfrak{d}_1x+\mathfrak{d}_3)+\mathfrak{d}_5x^2+\mathfrak{d}_7x+\mathfrak{d}_9.\]
Let $\mathfrak{d}={}^t(\mathfrak{d}_9, \mathfrak{d}_7, \mathfrak{d}_5)$. 

\begin{thm}[{\cite[Corollary 3.2]{BL-2005}}]\label{2024.9.15.1}
$(\mathrm{i})$ For $u, v\in\mathbb{C}^3$, as a polynomial in $x$, we have the following relation
\[\bigl(x^3-z\mathfrak{c}_1(u+v)\bigr)\bigl(x^3-z\mathfrak{c}_1(u)\bigr)\bigl(x^3-z\mathfrak{c}_1(v)\bigr)=\Phi\bigl(x, f(x)\bigr),\]
where $z=(1, x, x^2)$. 

\vspace{1ex}

\noindent $(\mathrm{ii})$ For $u, v\in\mathbb{C}^3$, we have 
\[\mathfrak{c}_2(u+v)=\bigl(G(u+v)+\mathfrak{d}_2I_3\bigr)^{-1}\bigl(\mathfrak{d}_1\mathfrak{c}_3(u+v)+\mathfrak{d}_3\mathfrak{c}_1(u+v)+\mathfrak{d}\bigr).\]
\end{thm}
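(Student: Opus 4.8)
The plan is to reduce both parts to a single structural observation: the matrix $G(u)$ is the companion matrix of the cubic $P_u(x):=x^3-z\mathfrak{c}_1(u)=x^3-\wp_{1,1}^V(u)x^2-\wp_{1,3}^V(u)x-\wp_{1,5}^V(u)$, so that for each root $x_i$ of $P_u$ the row vector $z_i:=(1,x_i,x_i^2)$ is a left eigenvector of $G(u)$ with eigenvalue $x_i$, i.e. $z_iG(u)=x_iz_i$. By the solution of the Jacobi inversion problem (as used in the proof of Theorem \ref{2023.2.21.2}), for generic $u$ the roots $x_1,x_2,x_3$ of $P_u$ are exactly the $x$-coordinates of the divisor $\{(x_i,y_i)\}_{i=1}^3$ representing $u$, with $2y_i=-(\wp_{1,1,1}^V(u)x_i^2+\wp_{1,1,3}^V(u)x_i+\wp_{1,1,5}^V(u))$, that is $z_i\mathfrak{c}_2(u)=-y_i$. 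Combining these I obtain $z_i\mathfrak{c}_1(u)=x_i^3$ and $z_i\mathfrak{c}_2(u)=-y_i$, and then, applying $z_iG(u)=x_iz_i$, also $z_i\mathfrak{c}_3(u)=x_i^4$ and $z_i\mathfrak{c}_4(u)=-x_iy_i$.

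With these identities in hand I would first show that the numerator $N(x,y)$ of $R$ vanishes at each point $(x_i,-y_i)$, i.e. on the divisor representing $-u$. Indeed, the linear combination $z_i\cdot(\text{rows }1\text{--}3)$ of the three rows $\bigl(I_3\ B(u)\ \mathfrak{c}_4(u)\bigr)$ produces, by the identities above, exactly the vector $(1,x_i,x_i^2,x_i^3,-y_i,x_i^4,-x_iy_i)$, which is the top row of $N$ evaluated at $(x_i,-y_i)$; hence that top row becomes linearly dependent on rows $1$--$3$ and $N(x_i,-y_i)=0$. The same argument with rows $4$--$6$ shows $R$ vanishes on the divisor of $-v$. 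Next I would count zeros: writing $R=(x+\mathfrak{d}_2)y+\mathfrak{d}_1x^4+\mathfrak{d}_3x^3+\mathfrak{d}_5x^2+\mathfrak{d}_7x+\mathfrak{d}_9$, the function $R$ has its only pole at $\infty$, of order $9$ (from the term $xy$, whose coefficient is normalised to $1$ precisely by the division by $|B(v)-B(u)|$), so $R$ has exactly $9$ zeros on $V$. Six of them are the points above; by Abel's theorem the sum of all nine zeros vanishes in $\operatorname{Jac}(V)$, so the remaining three zeros $(c_i,\hat{y}_i)$ satisfy $\sum_i\int_\infty^{(c_i,\hat{y}_i)}\omega_V=u+v$, and therefore their $x$-coordinates $c_1,c_2,c_3$ are the roots of $P_{u+v}$.

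Part (i) then follows immediately: since $\Phi(x,f(x))$, the value of $-R(x,y)R(x,-y)$ at $y^2=f(x)$, equals $(x+\mathfrak{d}_2)^2f(x)-\bigl(\mathfrak{d}_1x^4+\mathfrak{d}_3x^3+\mathfrak{d}_5x^2+\mathfrak{d}_7x+\mathfrak{d}_9\bigr)^2$, it is a monic polynomial of degree $9$ in $x$ vanishing at every root of $P_u$, $P_v$ and $P_{u+v}$, so it must coincide with the product $(x^3-z\mathfrak{c}_1(u+v))(x^3-z\mathfrak{c}_1(u))(x^3-z\mathfrak{c}_1(v))$, which is likewise monic of degree $9$ with exactly these roots. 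For Part (ii) I would contract both sides of the asserted identity with each $z_i=(1,c_i,c_i^2)$. On the left $z_i\mathfrak{c}_2(u+v)=-\hat{y}_i$, while $R(c_i,\hat{y}_i)=0$ gives $\hat{y}_i=-e(c_i)/(c_i+\mathfrak{d}_2)$ with $e(x)=\mathfrak{d}_1x^4+\mathfrak{d}_3x^3+\mathfrak{d}_5x^2+\mathfrak{d}_7x+\mathfrak{d}_9$; on the right, using $z_i(G(u+v)+\mathfrak{d}_2I_3)^{-1}=(c_i+\mathfrak{d}_2)^{-1}z_i$ together with $z_i\mathfrak{c}_3(u+v)=c_i^4$, $z_i\mathfrak{c}_1(u+v)=c_i^3$ and $z_i\mathfrak{d}=\mathfrak{d}_5c_i^2+\mathfrak{d}_7c_i+\mathfrak{d}_9$, I obtain $(c_i+\mathfrak{d}_2)^{-1}e(c_i)$. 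Both sides agree for $i=1,2,3$, and since $c_1,c_2,c_3$ are distinct the vectors $z_1,z_2,z_3$ form a basis of $\mathbb{C}^3$ (Vandermonde), so the two $3$-vectors coincide. Finally, all identities are proved for generic $(u,v)$ and, every quantity being meromorphic in $(u,v)$, they extend to all of $\mathbb{C}^3\times\mathbb{C}^3$ by continuity.

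The step I expect to be the main obstacle is the global zero-count of $R$ together with the Abel's-theorem bookkeeping in the second paragraph: one must verify that the pole at $\infty$ has order exactly $9$ and that $R$ has no other poles, and then track the signs in the Jacobi inversion consistently so that the three leftover zeros sum to $+(u+v)$ rather than $-(u+v)$. The companion-matrix eigenvector identity is the conceptual key that makes every determinant collapse, but the sign conventions and the genericity-to-everywhere extension are where genuine care is needed.
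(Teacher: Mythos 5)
The paper does not prove this theorem at all: it is imported verbatim as \cite[Corollary~3.2]{BL-2005}, so there is no internal argument to compare yours against. Your blind proof is, as far as I can check, a correct self-contained reconstruction of the Buchstaber--Leykin interpolation-polynomial argument, and it rests on exactly the right structural facts. The eigenvector identity $z_iG(u)=x_iz_i$ for $z_i=(1,x_i,x_i^2)$ with $x_i$ a root of $x^3-z\mathfrak{c}_1(u)$ is correct (the third entry of $z_iG(u)$ is $\wp_{1,5}^V+\wp_{1,3}^Vx_i+\wp_{1,1}^Vx_i^2=x_i^3$), and together with the Jacobi-inversion identities $z_i\mathfrak{c}_1(u)=x_i^3$, $z_i\mathfrak{c}_2(u)=-y_i$ (consistent with the sign convention $2y_i=-(\wp_{1,1,1}^Vx_i^2+\wp_{1,1,3}^Vx_i+\wp_{1,1,5}^V)$ used in the proof of Theorem~\ref{2023.2.21.2}) it does make the row-reduction of the $7\times7$ determinant collapse exactly as you claim, so $R$ vanishes on the divisors of $-u$ and $-v$. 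The normalization is also right: expanding the numerator along its first row, the cofactor of the $xy$ entry is $|B(v)-B(u)|$, so dividing by it makes the $xy$-coefficient equal to $1$ and the pole of $R$ at $\infty$ of order exactly $9$ (since $\operatorname{ord}_\infty(x)=-2$, $\operatorname{ord}_\infty(y)=-7$). The Abel-theorem bookkeeping then correctly places the remaining three zeros on the divisor of $+(u+v)$, and the contraction of part (ii) against the basis $z_1,z_2,z_3$ is clean. Two small points you should make explicit rather than leave implicit: for part (i) you need the converse inclusion as well, namely that every root of $\Phi(x,f(x))=(x+\mathfrak{d}_2)^2f(x)-q(x)^2$ is the $x$-coordinate of a zero of $R$ on $V$ (immediate from the factorization $-R(x,y)R(x,-y)$ over $y^2=f(x)$), so that the two monic degree-$9$ polynomials have identical root multisets generically; and for part (ii) the invertibility of $G(u+v)+\mathfrak{d}_2I_3$ holds only for generic $(u,v)$ (equivalently $c_i\neq-\mathfrak{d}_2$), which is fine since, as you say, all identities extend by meromorphic continuation. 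With those caveats recorded, your argument is a valid proof of the cited result.
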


We consider the $3\times 3$ matrix $L$ defined in the proof of Lemma \ref{2.15.1}. 

\begin{cor}\label{2023.2.21.3}
Let $u={}^t(u_1,u_3,u_5)\in\mathbb{C}^3$. 
For $i=1,3,5$, we can express $\wp_{1,i}^V(Lu)$ and $\wp_{1,1,i}^V(Lu)$ in terms of $\wp(u_1)$, $\wp'(u_1)$, $\wp_{1,j}^C(u_3,u_5)$, and $\wp_{1,1,j}^C(u_3,u_5)$ with $j=1,3$ explicitly. 
\end{cor}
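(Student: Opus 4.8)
The plan is to combine the explicit matrix $L=(L_1\ L_2)$ from Lemma \ref{2.15.1} with the addition formula of Theorem \ref{2024.9.15.1} and the two reduction results already established, namely Theorem \ref{2023.2.21.2} (for the $E$-direction) and Theorem \ref{2023.2.10.1} (for the $C$-direction). The key observation is that for $u={}^t(u_1,u_3,u_5)\in\mathbb{C}^3$ we have the decomposition
\[
Lu=L_1u_1+L_2\,{}^t(u_3,u_5),
\]
so that, setting $v=L_1u_1$ and $w=L_2\,{}^t(u_3,u_5)$, the point $Lu=v+w$ is an honest sum of two points in $\mathbb{C}^3$. This reduces the problem of evaluating $\wp_{1,i}^V$ and $\wp_{1,1,i}^V$ at $Lu$ to an addition problem, which is precisely what Theorem \ref{2024.9.15.1} is designed to handle.

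First I would compute the vectors $\mathfrak{c}_1(v),\mathfrak{c}_2(v),\mathfrak{c}_3(v),\mathfrak{c}_4(v)$ and $\mathfrak{c}_1(w),\dots,\mathfrak{c}_4(w)$ entirely in terms of elementary functions of the elliptic and genus-2 data. By Theorem \ref{2023.2.21.2}, every component of $\mathfrak{c}_j(v)=\mathfrak{c}_j(L_1u_1)$ is a polynomial in $\wp(u_1)$ and $\wp'(u_1)$; the vectors $\mathfrak{c}_3,\mathfrak{c}_4$ are obtained by applying $G(v)$, whose entries are themselves such polynomials, so the same holds for them. Likewise, by Theorem \ref{2023.2.10.1}, every component of $\mathfrak{c}_j(w)=\mathfrak{c}_j(L_2\,{}^t(u_3,u_5))$ is a rational function of $p_2,p_3,p_4,p_5$, i.e.\ of $\wp_{1,1}^C(u_3,u_5)$, $\wp_{1,1,1}^C$, $\wp_{1,3}^C$, and $\wp_{1,1,3}^C$; applying $G(w)$ preserves rationality in these quantities. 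Thus $B(v),B(w),\mathfrak{c}_4(v),\mathfrak{c}_4(w)$ are all known explicitly as rational expressions in $\wp(u_1),\wp'(u_1)$ and the four genus-2 functions.

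Next I would feed these explicit vectors into the addition machinery. The determinant $R(x,y)$ in Theorem \ref{2024.9.15.1}, formed from $B(v),\mathfrak{c}_4(v),B(w),\mathfrak{c}_4(w)$ and divided by $|B(w)-B(v)|$, is then an explicit function of $x,y$ with coefficients rational in the known data; reading off $\mathfrak{d}_1,\dots,\mathfrak{d}_9$ and forming $\Phi(x,y^2)=-R(x,y)R(x,-y)$ gives, via part (i) of Theorem \ref{2024.9.15.1}, the cubic whose coefficients are linear in the entries of $\mathfrak{c}_1(v+w)=\mathfrak{c}_1(Lu)$. Comparing coefficients of powers of $x$ in the identity $\bigl(x^3-z\mathfrak{c}_1(v+w)\bigr)\bigl(x^3-z\mathfrak{c}_1(v)\bigr)\bigl(x^3-z\mathfrak{c}_1(w)\bigr)=\Phi(x,f(x))$ yields $\wp_{1,5}^V(Lu),\wp_{1,3}^V(Lu),\wp_{1,1}^V(Lu)$ explicitly. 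Finally, part (ii) of Theorem \ref{2024.9.15.1} gives $\mathfrak{c}_2(v+w)$, hence $\wp_{1,1,5}^V(Lu),\wp_{1,1,3}^V(Lu),\wp_{1,1,1}^V(Lu)$, by a single matrix inversion of $G(v+w)+\mathfrak{d}_2I_3$ whose entries are now also explicit. This establishes the desired explicit expressions and proves the corollary.

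The main obstacle will be bookkeeping rather than any conceptual difficulty: the expressions for $B(v),B(w)$ and the resulting $9\times 9$-type determinant $R(x,y)$ are large, and one must verify that the denominator $|B(w)-B(v)|$ does not vanish identically (so that $R$ is well defined) and that the cofactor extraction of the $\mathfrak{d}_j$ and the final coefficient comparison remain consistent. Since every ingredient is an explicit rational expression in $\wp(u_1),\wp'(u_1)$ and the four genus-2 functions, the resulting formulae for $\wp_{1,i}^V(Lu)$ and $\wp_{1,1,i}^V(Lu)$ are likewise explicit rational expressions in these quantities, which is exactly the claim of the corollary. Because the corollary asserts only the existence of such explicit expressions rather than displaying them, it suffices to exhibit the algorithm above; the genericity conditions needed for the intermediate steps hold on a dense open subset of $\mathbb{C}^3$, and the resulting identities then extend to all of $\mathbb{C}^3$ by continuity.
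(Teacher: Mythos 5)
Your proposal is correct and follows essentially the same route as the paper: decompose $Lu=L_1u_1+L_2\,{}^t(u_3,u_5)$, apply the addition formula of Theorem \ref{2024.9.15.1} to reduce to the values at $L_1u_1$ and $L_2\,{}^t(u_3,u_5)$, and then invoke Theorems \ref{2023.2.21.2} and \ref{2023.2.10.1}. The extra detail you give on unwinding the $\mathfrak{c}_j$ vectors, the determinant $R(x,y)$, and the genericity/continuity argument is a faithful elaboration of what the paper leaves implicit.
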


\begin{proof}
We have $Lu=L_1u_1+L_2{}^t(u_3,u_5)$. 
From Theorem \ref{2024.9.15.1}, for $i=1,3,5$, we can express $\wp_{1,i}^V(Lu)$ and $\wp_{1,1,i}^V(Lu)$ in terms of $\wp_{1,j}^V(L_1u_1)$, $\wp_{1,1,j}^V(L_1u_1)$, $\wp_{1,j}^V\bigl(L_2{}^t(u_3,u_5)\bigr)$, and $\wp_{1,1,j}^V\bigl(L_2{}^t(u_3,u_5)\bigr)$ with $j=1,3,5$. 
From Theorem \ref{2023.2.21.2}, for $j=1,3,5$, we can express $\wp_{1,j}^V(L_1u_1)$ and $\wp_{1,1,j}^V(L_1u_1)$ in terms of $\wp(u_1)$ and $\wp'(u_1)$.  
From Theorem \ref{2023.2.10.1}, for $j=1,3,5$, we can express $\wp_{1,j}^V\bigl(L_2{}^t(u_3,u_5)\bigr)$ and $\wp_{1,1,j}^V\bigl(L_2{}^t(u_3,u_5)\bigr)$ in terms of $\wp_{1,k}^C(u_3,u_5)$ and $\wp_{1,1,k}^C(u_3,u_5)$ with $k=1,3$. 
Therefore, we obtain the statement of the corollary. 
\end{proof}

\begin{rem}
We assume that one of the following three conditions is satisfied: 
\[a^2=b^2c^2,\qquad b^2=a^2c^2,\qquad c^2=a^2b^2,\]
where $a$, $b$, and $c$ are defined by (\ref{4.26}). 
Then the functions $\wp_{1,j}^C$ and $\wp_{1,1,j}^C$ with $j=1,3$ are expressed in terms of elliptic functions (cf. \cite[Theorem 5.9]{AB2022}). 
Therefore, in this case, we can express $\wp_{1,i}^V$ and $\wp_{1,1,i}^V$ with $i=1,3,5$ in terms of elliptic functions. 
\end{rem}

\vspace{2ex}

{\bf Acknowledgements.} This work was supported by JSPS KAKENHI Grant Number JP21K03296 and was partly supported by MEXT Promotion of Distinctive Joint Research Center Program JPMXP0723833165.


\begin{thebibliography}{9}

\bibitem{AP-2006}
R. D. M. Accola, E. Previato, Covers of Tori: Genus Two, Lett. Math. Phys. \textbf{76} (2006), 135--161. 

\bibitem{AB2022}
T. Ayano, V. M. Buchstaber, Relationships Between Hyperelliptic Functions of Genus 2 and Elliptic Functions, SIGMA {\bf 18} (2022), 010, 30 pages. 


\bibitem{BE0}
E. D. Belokolos, A. I. Bobenko, V. Z. Enol'skii, A. R. Its, V. B. Matveev, \textit{Algebro-Geometric Approach to Nonlinear Integrable Equations}, Springer Series in Nonlinear Dynamics, Springer-Verlag Berlin Heidelberg, 1994.

\bibitem{E1}
E. D. Belokolos, V. Z. Enolskii, Reduction of Abelian Functions and Algebraically Integrable Systems. I, J. Math. Sci. \textbf{106} (2001), 3395--3486.

\bibitem{BE2}
E. D. Belokolos, V. Z. Enolskii, Reduction of Abelian Functions and Algebraically Integrable Systems. II, J. Math. Sci. \textbf{108} (2002), 295--374.

\bibitem{Bernatska2024}
J. Bernatska, Computation of $\wp$-functions on plane algebraic curves, arXiv:2407.05632, (2024). 

\bibitem{B-K-2020}
J. Bernatska, Y. Kopeliovich, Addition of Divisors on Hyperelliptic Curves via Interpolation Polynomials, SIGMA \textbf{16} (2020), 053, 21 pages. 


\bibitem{B-H-2004}
C. Birkenhake, H. Lange, \textit{Complex Abelian Varieties}, Grundlehren der mathematischen Wissenschaften 302, Springer Berlin, Heidelberg, 2004. 

\bibitem{B-W-2003}
C. Birkenhake, H. Wilhelm, Humbert Surfaces and the Kummer Plane, Trans. Amer. Math. Soc. \textbf{355} (2003), 1819--1841.

\bibitem{O-Bolza-1887}
O. Bolza, Ueber die Reduction hyperelliptischer Integrale erster Ordnung und erster Gattung auf elliptische durch eine Transformation vierten Grades, Math. Ann. \textbf{28} (1887), 447--456.

\bibitem{O-Bolza-1935}
O. Bolza, Der singul\"are Fall der Reduktion hyperelliptischer Integrale erster Ordnung auf elliptische durch eine Transformation dritten Grades, Math. Ann. \textbf{111} (1935), 477--500.

\bibitem{BCMS2021}
N. Braeger,  A. Clingher, A. Malmendier, S. Spatig, Isogenies of certain K3 surfaces of rank 18, Res. Math. Sci. \textbf{8} (2021), 57, 60 pages. 

\bibitem{B-L-S-2013}
R. Br\"oker, K. Lauter, M. Streng, Abelian surfaces admitting an $(l,l)$-endomorphism, J. Algebra \textbf{394} (2013), 374--396. 

\bibitem{BEL-97-1} 
V. M. Buchstaber, V. Z. Enolskii, D. V. Leykin, Hyperelliptic Kleinian Functions and Applications, \textit{Solitons, Geometry, and Topology$:$ On the Crossroad}, eds. V. M. Buchstaber, S. P. Novikov, Amer. Math. Soc. Transl. Ser. 2, 179, Amer. Math. Soc., Providence, RI, 1997, 1–33. 

\bibitem{BEL-97-2}
V. M. Buchstaber, V. Z. Enolskii, D. V. Leykin, Kleinian Functions, Hyperelliptic Jacobians and Applications, \textit{Reviews in Mathematics and Mathematical Physics} 10,  eds. S. P. Novikov, I. M. Krichever, Harwood Academic Publishers, 1997, 3--120.


\bibitem{BEL-2012}  
V. M. Buchstaber, V. Z. Enolski, D. V. Leykin, Multi-Dimensional Sigma-Functions, arXiv:1208.0990, (2012). 

\bibitem{BL-2005}
V. M. Buchstaber, D. V. Leykin, Addition Laws on Jacobian Varieties of Plane Algebraic Curves, Proc. Steklov Inst. Math. \textbf{251} (2005), 49--120.


\bibitem{FCoppini2020}
F. Coppini, P. G. Grinevich, P. M. Santini, Effect of a small loss or gain in the periodic nonlinear Schr\"odinger anomalous wave dynamics, Phys. Rev. E \textbf{101} (2020), 032204, 8 pages. 

\bibitem{Enolskii-Salerno-1996}
V. Z. Enolskii, M. Salerno, Lax representation for two-particle dynamics splitting on two tori, J. Phys. A: Math. Gen. \textbf{29} (1996), L425--L431.

\bibitem{FK}
G. Frey, E. Kani, Curves of genus $2$ covering elliptic curves and an arithmetical application, \textit{Arithmetic Algebraic Geometry}, eds. G. Geer, F. Oort, J. Steenbrink, Progr. Math. 89, Birkh\"auser Boston, MA, 1991, 153--176.

\bibitem{Grinevich2024}
P. G. Grinevich, Riemann Surfaces Close to Degenerate Ones in the Theory of Rogue Waves, Proc. Steklov Inst. Math. \textbf{325} (2024), 86--110. 

\bibitem{S1}
J. Gutierrez, T. Shaska, Hyperelliptic Curves with Extra Involutions, LMS J. Comput. Math. \textbf{8} (2005), 102--115.

\bibitem{Howe-Leprevost-Poonen-Large-torsion-subgroups-2000}
E. W. Howe, F. Lepr\'evost, B. Poonen, Large torsion subgroups of split Jacobians of curves of genus two or three, Forum Math. \textbf{12} (2000), 315--364. 


\bibitem{KT}
T. Katsura, K. Takashima, Decomposed Richelot Isogenies of Jacobian Varieties of Hyperelliptic Curves and Generalized Howe Curves, Commentarii Mathematici Universitatis Sancti Pauli \textbf{72} (2024), 3--17.  


\bibitem{Ku}
R. M. Kuhn, Curves of Genus $2$ with Split Jacobian, Trans. Amer. Math. Soc. \textbf{307} (1988), 41--49.


\bibitem{Matsutani2024}
S. Matsutani, A numerical representation of hyperelliptic KdV solutions, Commun. Nonlinear Sci. Numer. Simulat. \textbf{138} (2024), 108259, 7 pages. 

\bibitem{Milne}
J. S. Milne, Abelian Varieties, Version~2.0 (2008), available at
\begin{verbatim}
https://www.jmilne.org/math/CourseNotes/av.html
\end{verbatim}

\bibitem{P}
J. Paulhus, Decomposing Jacobians of curves with extra automorphisms, Acta Arith. \textbf{132} (2008), 231--244. 



\bibitem{J.P.Serre-2020}
J. P. Serre, \textit{Rational points on curves over finite fields}, Documents Math\'ematiques 18, Soci\'et\'e Math\'ematique de France, 2020.



\bibitem{AOSmirnov1988}
A. O. Smirnov, A matrix analogue of Appell's theorem and reductions of multidimensional Riemann theta-functions, Math. USSR Sb. \textbf{61} (1988), 379--388. 

\bibitem{AOSmirnov}
A. O. Smirnov, Periodic Two-Phase \lq\lq Rogue Waves'', Math. Notes \textbf{94} (2013), 897--907.

\end{thebibliography}
\end{document}